\theoremstyle{plain}
\newtheorem{thm}{Theorem}
\newcommand{\vc}{\mathbf}
\renewcommand{\wt}{\mathsf{w}}
\newcommand{\co}{\mathsf{c}}
\newcommand{\la}{\ell}
\newcommand{\di}{d}
\newcommand{\sgn}{\operatorname{sgn}}
\newcommand{\Co}{\mathcal{C}}
\newcommand{\Pa}{\mathcal{P}}
\newcommand{\U}{\mathcal{U}}
\newcommand{\W}{\mathcal{W}}
\newcommand{\Arg}{\operatorname{Arg}}
\begin{document}


\thispagestyle{plain}


\title[Compositions into Powers of $b$]{Compositions into Powers of $b$: \\
  Asymptotic Enumeration and Parameters}


\author{Daniel Krenn}
\address{Daniel Krenn \\
Institute of Analysis and Computational Number Theory (Math A) \\
Graz University of Technology \\
Steyrergasse 30 \\
8010 Graz \\
Austria
}
\email{\href{mailto:math@danielkrenn.at}{math@danielkrenn.at} \textit{or}
  \href{mailto:krenn@math.tugraz.at}{krenn@math.tugraz.at}}


\author{Stephan Wagner}
\address{Stephan Wagner \\
Department of Mathematical Sciences \\
Stellenbosch University \\
Private Bag X1 \\
Matieland 7602 \\
South Africa
}
\email{\href{mailto:swagner@sun.ac.za}{swagner@sun.ac.za}}


\thanks{This material is based upon work supported by the National Research
  Foundation of South Africa under grant number 70560.}

\thanks{Daniel Krenn is supported by the Austrian Science Fund (FWF): P24644
  and by the Austrian Science Fund (FWF): W1230, Doctoral Program ``Discrete
  Mathematics''.}


\thanks{The authors would like to thank Christian Elsholtz for pointing us at the problems discussed in this paper.}


\date{\today}


\keywords{compositions, powers of $2$, 
  infinite transfer matrices, asymptotic enumeration}


\begin{abstract}
  For a fixed integer base $b\geq2$, we consider the number of compositions
  of~$1$ into a given number of powers of~$b$ and, related, the maximum number
  of representations a positive integer can have as an ordered sum of powers
  of~$b$.

  We study the asymptotic growth of those numbers and give precise asymptotic
  formulae for them, thereby improving on earlier results of Molteni. Our
  approach uses generating functions, which we obtain from infinite transfer
  matrices.

  With the same techniques the distribution of the largest denominator and the
  number of distinct parts are investigated.
\end{abstract}

\maketitle


\section{Introduction}


Representations of integers as sums of powers of $2$ occur in various contexts, most notably of course in the usual binary representation.
\emph{Partitions} of integers into powers of $2$, i.e., representations of the
form
\begin{equation}\label{eq:partition}
\ell = 2^{a_1} + 2^{a_2} + \cdots + 2^{a_n}
\end{equation}
with nonnegative integers $a_1 \geq a_2 \geq \cdots \geq a_n$ (not necessarily distinct!) are also known as
\emph{Mahler partitions} (see \cite{Knuth:1966:almost, Bruijn:1948:mahler,
  Pennington:1953:Mahler-part-prob, Mahler:1940:spec-functional-eq}).

The number of such partitions exhibits interesting periodic fluctuations. The
situation changes, however, when \emph{compositions} into powers of $2$ are considered,
i.e., when the summands are arranged in an order.  In other words, we consider
representations of the form~\eqref{eq:partition} without further restrictions
on the exponents $a_1$, $a_2$, \ldots, $a_n$ other than being nonnegative.


\medskip

Motivated by the study of the exponential sum 
\begin{equation*}
  s(\xi) = \sum_{r=1}^{\tau} \xi^{2^r},
\end{equation*}
where $\xi$ is a primitive $q$th root of unity and $\tau$ is the order of $2$
modulo $q$ (see \cite{Molteni:2010:canc-exp-sum}),
Molteni~\cite{Molteni:2012:repr-2-powers-asy} recently studied the maximum
number of representations a positive integer can have as an ordered sum of $n$
powers of $2$. More generally, fix an integer $b\geq2$, let
\begin{equation}\label{eq:def-Ub}
  \f{\U_b}{\ell,n} = \card*{\set*{(a_1,a_2,\ldots,a_n) \in \N_0^n}{%
      b^{a_1} + b^{a_2} + \dots + b^{a_n} = \ell}}
\end{equation}
be the number of representations of $\ell$ as an ordered sum of $n$ powers of
$b$, and let $\f{\W_b}{s,n}$ be the maximum of $\f{\U_b}{\ell,n}$ over all
positive integers $\ell$ with $b$\nbd-ary sum of digits equal to $s$. It was
shown in \cite{Molteni:2010:canc-exp-sum} that
\begin{equation}\label{eq:ws_equation}
  \frac{\f{\W_2}{s,n}}{n!} 
  = \sum_{\substack{k_1,k_2,\ldots,k_s \geq 1 \\ k_1+k_2+\cdots+k_s = n}} 
  \prod_{j=1}^s \frac{\W_2(1,k_j)}{k_j!},
\end{equation}
which generalizes in a straightforward fashion to arbitrary bases~$b$. So
knowledge of $\f{\W_b}{1,n}$ is the key to understanding $\f{\W_b}{s,n}$ for
arbitrary $s$.


\medskip

For the moment, let us consider the case $b=2$. There is an equivalent
characterisation of $\f{\W_2}{1,n}$ in terms of compositions of $1$. To this
end, note that the number of representations of $2^h\ell$ as a sum of powers
of $2$ is the same as the number of representations of $\ell$ for all integers $h$
if negative exponents are allowed as well (simply multiply/divide everything by
$2^h$). Therefore, $\f{\W_2}{1,n}$ is also the number of solutions to the
Diophantine equation
\begin{equation}\label{eq:maineq}
  2^{-k_1} + 2^{-k_2} + \cdots + 2^{-k_n} = 1
\end{equation}
with nonnegative integers $k_1,k_2,\ldots,k_n$, i.e., the number of
\emph{compositions} of $1$ into powers of $2$. This sequence starts with
\begin{equation*}
  1, 1, 3, 13, 75, 525, 4347, 41245, 441675, 5259885, 68958747, \dots
\end{equation*}
and is \OEIS{A007178} in the On-Line Encyclopedia of Integer
Sequences~\cite{OEIS:2014}.


The main goal of this paper is to determine precise asymptotics for the number
of such binary compositions as $n \to \infty$. Lehr, Shallit and
Tromp~\cite{Lehr-Shallit-Tromp:1996:vec-spc-autom-reals} encountered these
compositions in their work on automatic sequences and gave a first bound,
namely
\begin{equation*}
  \W_2(1,n)/n! \leq K \cdot 1.8^n
\end{equation*}
for some constant $K$. It was mainly based on an asymptotic formula for the
number of \emph{partitions} of $1$ into powers of $2$, which was derived
independently in different contexts, cf.\@ \cite{Boyd:1975,
  Flajolet-Prodinger:1987:level, Komlos-Moser-Nemetz:1984} (or see
the recent paper of Elsholtz, Heuberger and
Prodinger~\cite{Elsholtz-Heuberger-Prodinger:2013:huffm} for a detailed
survey). This bound was further improved by Molteni, who gave the inequalities
\begin{equation*}
  0.3316 \cdot (1.1305)^n \leq \W_2(1, n)/n! \leq  (1.71186)^{n-1} \cdot n^{-1.6}
\end{equation*}
in \cite{Molteni:2010:canc-exp-sum}. Giorgilli and
Molteni~\cite{Giorgilli-Molteni:2013:repr-2-powers-rec} provided an efficient
recursive formula for $\W_2(1,n)$ and used it to prove an intriguing congruence
property. In his recent paper~\cite{Molteni:2012:repr-2-powers-asy}, Molteni
succeeded in proving the following result, thus also disproving a conjecture of
Knuth on the asymptotic behaviour of $\W_2(1,n)$.


\begin{thm}[Molteni~\cite{Molteni:2012:repr-2-powers-asy}]\label{thm:molteni}
The limit
\begin{equation*}
  \gamma = \lim_{n \to \infty} (\W_2(1,n)/n!)^{1/n} 
  = 1.192674341213466032221288982528755\ldots
\end{equation*}
exists.
\end{thm}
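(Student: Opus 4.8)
The plan is to prove the stronger statement $\W_2(1,n)/n!\sim c\,\gamma^{n}$ for a constant $c>0$; Theorem~\ref{thm:molteni} follows at once. Write $c_n=\W_2(1,n)/n!$ and $F(x)=\sum_{n\ge1}c_nx^n$. A solution of $2^{-k_1}+\dots+2^{-k_n}=1$ is determined by its multiplicity vector $(m_j)_{j\ge0}$, $m_j=\#\{i:k_i=j\}$, which must satisfy $\sum_{j\ge0}m_j2^{-j}=1$; since such a vector comes from exactly $n!/\prod_jm_j!$ ordered tuples,
\[
  F(x)=\sum_{(m_j)}\ \prod_{j\ge0}\frac{x^{m_j}}{m_j!},
\]
the sum ranging over all finitely supported $(m_j)$ with $\sum_jm_j2^{-j}=1$. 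Resolving that constraint by peeling off one binary level at a time, I would introduce for each integer $s\ge0$ an auxiliary series $G_s(x)$ --- the generating function for filling a suitably rescaled residual of size $s$ --- and obtain the infinite linear system
\[
  G_0(x)=1,\qquad G_s(x)=\sum_{m=0}^{s}\frac{x^{m}}{m!}\,G_{2(s-m)}(x)\quad(s\ge1),
\]
with $F(x)=G_1(x)=G_2(x)+x$. In matrix form this reads $\vc G=\vc t(x)+T(x)\vc G$, where $T(x)$ has entries $T_{s,s'}(x)=x^{\,s-s'/2}/(s-s'/2)!$ for even $s'$ with $2\le s'\le2s$ and $\vc t(x)$ collects the contributions of $G_0=1$, so that $F(x)$ is a fixed coordinate of $(I-T(x))^{-1}\vc t(x)$.

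The next step is an operator-analytic study of $T(x)$. After conjugating by $\operatorname{diag}(\beta^{s})$ for a fixed $\beta>1$ the entries acquire a factor $\beta^{s-s'}$, and together with their factorial decay $1/(s-s'/2)!$ along each row this makes the conjugated operator trace-class on $\ell^2$ and an entire function of $x$. Hence $\Delta(x)=\det(I-T(x))$ is an entire function with $\Delta(0)=1$, and $F$ continues to a function meromorphic on the whole complex plane whose poles are among the zeros of $\Delta$; in particular its radius of convergence $\rho$ is positive. That $\rho<\infty$ follows from the system itself: the $s=2$ and $s=4$ equations give $(1-x)G_2=G_4+\tfrac{x^2}{2}$ and $G_4\ge\tfrac{x^3}{6}G_2$, whence $\bigl(1-x-\tfrac{x^3}{6}\bigr)G_2(x)\ge\tfrac{x^2}{2}$, forcing $G_2$ --- and with it $F$ --- to become singular no later than the zero of $1-x-x^3/6$ in $(0,1)$; alternatively it is immediate from the exponential lower bound for $c_n$ recalled above. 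Since $F$ has nonnegative coefficients, Pringsheim's theorem makes $x=\rho$ itself a singularity, so that $\limsup_n c_n^{1/n}=1/\rho=:\gamma$.

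What remains --- and is the heart of the matter --- is to determine this dominant singularity precisely: one has to show that $\Delta$ has a unique zero $\rho$ of smallest modulus, that it is real and simple, and that it is genuinely a pole of $F$. Granting this, singularity analysis gives $c_n\sim c\,\gamma^{n}$ and in particular $c_n^{1/n}\to\gamma$, and then $\rho$ --- hence $\gamma=1.192674341\ldots$ --- together with $c$ is computed to arbitrary precision from the finite-section determinants $\det(I-T_N(x))$, with error bounds coming from the factorial tails. The tool for ``unique, real, simple'' is an infinite-dimensional Perron--Frobenius argument: for $x>0$ the operator $T(x)$ is nonnegative, its recurrent part (the walk on the states $2,4,6,\dots$) is irreducible, and the diagonal entries $T_{s,s}(x)=x^{s/2}/(s/2)!$ make it aperiodic --- relatedly, $c_n>0$ for \emph{every} $n\ge1$, as $\tfrac12+\tfrac14+\dots+\tfrac1{2^{n-1}}+\tfrac1{2^{n-1}}=1$ shows. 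I expect this to be the main obstacle, precisely because $T(x)$ is not banded: base-$2$ doubling couples state $s$ to all states up to $2s$, so $I-T(x)$ is neither triangular nor a finite perturbation of a shift, and the textbook Perron--Frobenius theory for finite irreducible nonnegative matrices cannot simply be invoked; rigorously establishing irreducibility, aperiodicity and, above all, simplicity of the leading eigenvalue in this infinite setting --- and then transferring that to the zeros of $\Delta$ and to the coefficient asymptotics of $F$ --- is where the real effort lies.
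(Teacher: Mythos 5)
Your set-up is sound and runs parallel to the paper's: your weighted sum over multiplicity vectors is exactly the paper's weight $\wt(\vc{k})=1/\prod_j w_j!$, your residual-state system $G_s(x)=\sum_{m=0}^{s}\frac{x^m}{m!}G_{2(s-m)}(x)$ is a top-down reindexing of the paper's bottom-up system~\eqref{eq:genfun_id}, and your trace-class/Fredholm route to the entireness of $\Delta(x)=\det(I-T(x))$ plays the role of Lemma~\ref{lem:bound-det-coeff} (the paper instead bounds the coefficients of the formal determinant directly, which has the added benefit of explicit tail bounds used later for certified numerics). The finite-radius argument via $(1-x-\tfrac{x^3}{6})G_2\succeq \tfrac{x^2}{2}$ and Pringsheim is also fine and matches the paper's use of a coefficient lower bound plus Pringsheim.

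The genuine gap is that the one step you yourself flag as ``the heart of the matter'' --- that the dominant singularity of $F$ is a \emph{unique, real, simple pole} with no other singularities of modulus $\leq\rho+\epsilon$ --- is never proved; everything after ``Granting this'' is conditional, and without it you only get $\limsup_n c_n^{1/n}=1/\rho$, not the limit, let alone $c_n\sim c\,\gamma^n$. Your proposed tool (infinite-dimensional Perron--Frobenius for $T(x)$) is exactly the heavy machinery the paper avoids, and it is not routine: even with compactness after conjugation you must prove simplicity of the eigenvalue $1$ of $T(\rho)$, transfer spectral information to the zero set of the Fredholm determinant, rule out other spectral points on the circle $\abs{x}=\rho$, and show the zero of $\Delta$ is not cancelled in the particular coordinate giving $F$. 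The paper's Lemma~\ref{lem:poles} closes all of this at once by elementary positivity arguments: first, multiplying the fixed-point identity by $(\rho_b-x)^p$ produces a nonnegative eigenvector whose first entry is positive, whence all components have the same pole order; second, splitting off the first row and column of the system yields $\f{Q_b}{x}=\frac{x/b!}{1-\f{R}{x}}$ with $\f{R}{x}=m_{11}+\vc{r}(\vc{I}-\overline{\vc{M}})^{-1}\vc{c}$ a power series with nonnegative coefficients that is analytic in a disc of radius $>\rho_b$. Then $\f{R}{x}=1$ has a unique positive solution (which must be $\rho_b$), $\f{R'}{\rho_b}>0$ gives simplicity, and the triangle inequality excludes other solutions of the same modulus. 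If you want to complete your argument along your own lines, the most economical fix is to import this splitting trick into your system (eliminate all states but one to get a scalar equation $G=\frac{t(x)}{1-R(x)}$ with $R$ having nonnegative coefficients), rather than developing operator Perron--Frobenius theory; as it stands, the proposal establishes the framework but not Theorem~\ref{thm:molteni}.
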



Molteni's argument is quite sophisticated and involves the study of the
spectral radii of certain matrices. The aim of this paper will be to present a
different approach to the asymptotics of $\W_2(1,n)$ (and more generally,
$\f{\W_2}{s,n}$) by means of generating functions that allows us to obtain more
precise information. Our main theorem reads as follows.


\begin{thm}\label{thm:asymptotics}
  There exist constants $\alpha = 0.2963720490\dots$, $\gamma =
  1.1926743412\dots$ (as in Theorem~\ref{thm:molteni}) and $\kappa =
  2/(3\gamma) < 1$ such that
  \begin{equation*}
    \frac{\W_2(1,n)}{n!} = \alpha \gamma^{n-1} (1 + \Oh{\kappa^n}).
  \end{equation*}
  More generally, for every fixed $s$, there exists a polynomial $\f{P_s}{n}$
  with leading term
  \begin{equation*}
    (\alpha/\gamma)^s n^{s-1}/(s-1)!
  \end{equation*}
  such that
  \begin{equation*}
    \frac{\f{\W_2}{s,n}}{n!} = \f{P_s}{n} \gamma^{n} (1 + \Oh{\kappa^n}).
  \end{equation*}
\end{thm}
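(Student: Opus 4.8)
The plan is to set up a generating function $F(x) = \sum_{n\geq 0} \frac{\W_2(1,n)}{n!} x^n$ and analyze its singularities.  The natural starting point is the combinatorial decomposition of a composition of $1$ into powers of $2$: writing \eqref{eq:maineq} as $\sum_{i} 2^{-k_i} = 1$, one reads off the multiset of exponents level by level.  If $m_j$ denotes the number of parts equal to $2^{-j}$, then the carry conditions propagating from the deepest level upward force a system of constraints on the $m_j$; the transfer-matrix formalism mentioned in the abstract encodes exactly this.  Concretely, one tracks, at each level $j$, the ``deficit'' (the amount still needed, in units of $2^{-j}$) as a state, and the transfer operator acts on generating functions in $x$ whose coefficients record how many parts have been used.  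Because the deficit can be any nonnegative integer, the transfer matrix is infinite, so the first real task is to show that the relevant generating function $F(x)$ — obtained as an entry of $(I - xM(x))^{-1}$ or a suitable limit thereof — is well-defined and meromorphic in a disc of radius strictly larger than $1/\gamma$.

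The heart of the argument is then a singularity analysis: I would show that $F(x)$ has a unique dominant singularity at $x = 1/\gamma$, that this singularity is a simple pole, and that the next singularity has modulus at least $1/(\kappa\gamma) \cdot \gamma = \dotsb$ — more precisely, that all other singularities lie in $|x| \geq 3/2 \cdot (1/\gamma)$, which is what makes $\kappa = 2/(3\gamma)$ appear and gives the error term $\Oh{\kappa^n}$.  The value $\gamma$ is identified as the reciprocal of this dominant pole, matching Theorem~\ref{thm:molteni}, and $\alpha$ comes out as (a normalization of) the residue.  For the general statement about $\f{\W_s}{s,n}$, I would feed this into the relation \eqref{eq:ws_equation}: that formula says $\sum_n \frac{\f{\W_2}{s,n}}{n!} x^n = F(x)^s$ (up to the shift by the constant term / the $k_j\geq 1$ condition), so the generating function for $\f{\W_2}{s,n}$ is the $s$-th power of $F$.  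Taking $F(x)^s$ turns the simple pole at $1/\gamma$ into a pole of order $s$, whose principal part produces a polynomial $\f{P_s}{n}$ of degree $s-1$ in $n$ after extracting coefficients; the leading coefficient of $\f{P_s}{n}$ is governed by the residue $\alpha$ raised to the $s$-th power with the appropriate $\gamma$ and factorial factors, yielding $(\alpha/\gamma)^s n^{s-1}/(s-1)!$, and the subdominant singularities of $F^s$ still lie in $|x|\geq \tfrac32\cdot\tfrac1\gamma$, so the same $\Oh{\kappa^n}$ error persists.

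The main obstacle, and where most of the work will go, is the analytic control of the infinite transfer matrix: proving that the formal inversion makes sense and that $F(x)$ really does extend meromorphically past $|x| = 1/\gamma$ with exactly one pole there.  Infinite matrices do not automatically have resolvents, so one needs a genuine argument — e.g. exhibiting $M(x)$ as a compact (or trace-class) operator on a suitable weighted sequence space for $x$ in the relevant region, or setting up a functional equation for $F$ (a ``catalytic variable'' style equation recording the deficit) and solving it via the kernel method, then verifying analyticity of the solution by hand.  Locating the dominant singularity and showing it is simple, and then quantifying the gap to the next singularity precisely enough to get the constant $3/2$, will require careful estimates; I expect this is the step that forces the sophistication, and it is the analogue of Molteni's spectral-radius computation, now recast analytically.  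Once meromorphy and the singularity structure are in hand, the transfer from $F$ to $F^s$ and the final asymptotic extraction are routine applications of singularity analysis (transfer theorems for meromorphic functions), so I would present those briskly.
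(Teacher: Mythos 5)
Your outline coincides with the paper's strategy (an infinite transfer matrix, a meromorphic generating function, singularity analysis at a simple dominant pole, and passing to the $s$-th power of the generating function via \eqref{eq:ws_equation} for general $s$), but the two steps you defer are exactly where the content lies, and your proposed routes for them are problematic. First, meromorphy: you suggest realizing the transfer matrix as a compact or trace-class operator on a suitable weighted sequence space. The paper explicitly notes that this functional-analytic setting (as in Drmota--Gittenberger--Morgenbesser) is unavailable here, precisely because the entries of $\vc{M}(x)$ tend to infinity along rows; your proposal offers no concrete repair. What the paper does instead is explicit: it expands $\f{T}{x}=\det(\vc{I}-\vc{M}(x))$ and the corresponding numerator $\f{S}{x}$ as formal power series over finite index sets and permutations, proves superexponentially decaying coefficient bounds (Lemma~\ref{lem:bound-det-coeff}), concludes both are entire, and then locates the dominant pole of $Q(x)=1+\frac{x}{b!}\f{S}{x}/\f{T}{x}$ by a separate positivity argument: a lower bound on $q_b(m)$ forces a finite radius of convergence, Pringsheim's theorem puts a singularity at a positive real point, and an eigenvector/positivity argument together with the decomposition isolating the first row and column shows this pole is simple and is the only one on or near its circle (Lemma~\ref{lem:poles}). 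None of this is ``careful estimates'' in the abstract; your proposal contains no substitute for it.

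Second, the constant $\kappa=2/(3\gamma)$. You place the remaining singularities in $\abs{x}\geq\tfrac32\cdot\tfrac1\gamma$; that would only give a relative error $\Oh{(2/3)^n}$, which is weaker than the claimed $\Oh{\kappa^n}$ since $\kappa=2/(3\gamma)<2/3$. Your alternative formulation, modulus at least $1/\kappa=3\gamma/2\approx1.79$, is actually false: the second pole of $Q$ lies at approximately $1.554$. The statement that is both true and sufficient is that $\f{T}{x}$ has no further zero with $\abs{x}<\tfrac32$, and since $\gamma$ and the zeros of $T$ have no closed form, the paper establishes this by certified numerics: truncate $T$, bound the tail using the coefficient estimates of Section~\ref{sec:bounds}, and apply Rouch\'e's theorem on $\abs{x}=\tfrac32$ with interval arithmetic (Lemma~\ref{lem:zeros-det-b2}); the same machinery certifies the values of $\alpha$ and $\gamma$. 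Without this rigorous-numerical step neither the error exponent $\kappa$ nor the stated constants are proved. The reduction from $s=1$ to general $s$ via $Q(x)^s$ is fine and matches the paper.
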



We also prove a more general result for arbitrary bases instead of $2$.
Consider the Diophantine equation
\begin{equation}\label{eq:maineq_general}
  b^{-k_1} + b^{-k_2} + \cdots + b^{-k_n} = 1.
\end{equation}
Multiplying by the common denominator and taking the equation modulo $b-1$, we
see that there can only be solutions if $n \equiv 1 \bmod b-1$, i.e., $n =
(b-1)m+1$ for some nonnegative integer $m$. We write $q_b(m)$ for the number of
solutions ($n$-tuples of nonnegative integers
satisfying~\eqref{eq:maineq_general}) in this case. Note that $q_b(m)$ is also
the maximum number of representations of an arbitrary power of $b$ as an
ordered sum of $n = (b-1)m+1$ powers of $b$. We have the following general
asymptotic formula.


\begin{thm}\label{thm:asymptotics:general_base}
  For every positive integer $b \geq 2$, there exist constants $\alpha =
  \alpha_b$, $\gamma = \gamma_b$ and $\kappa = \kappa_b < 1$ such that the
  number $q_b(m)$ of compositions of $1$ into $n = (b-1)m+1$ powers of $b$,
  which is also the maximum number $\f{\W_b}{1,n}$ of representations of a
  power of $b$ as an ordered sum of $n$ powers of $b$, satisfies
  \begin{equation*}
    \frac{\f{\W_b}{1,n}}{n!} = \frac{q_b(m)}{n!} 
    = \alpha \gamma^m (1 + \Oh{\kappa^m}).
  \end{equation*}
  More generally, the maximum number $\f{\W_b}{s,n}$ of representations of a
  positive integer with $b$-ary sum of digits $s$ as an ordered sum of $n =
  (b-1)m+s$ powers of $b$ is asymptotically given by
  \begin{equation*}
    \frac{\f{\W_b}{s,n}}{n!} = \f{P_{b,s}}{m} \gamma^m (1 + \Oh{\kappa^m}),
  \end{equation*}
  where $\f{P_{b,s}}{m}$ is a polynomial with leading term $\alpha^s
  m^{s-1}/(s-1)!$.
\end{thm}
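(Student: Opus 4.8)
The plan is to set up a generating-function framework built on infinite transfer matrices, as announced in the abstract, and then extract the asymptotics by a singularity analysis. First I would introduce the bivariate (or multivariate) generating function
\begin{equation*}
  F_b(x,y) = \sum_{m,\ell} q_b(m,\ell)\, \frac{x^{(b-1)m+1}}{((b-1)m+1)!}\, y^{\ell},
\end{equation*}
where $q_b(m,\ell)$ counts the compositions of $1$ into $n=(b-1)m+1$ powers of $b$ whose largest denominator is $b^{\ell}$ (or some equivalent statistic), so that $q_b(m)=\sum_\ell q_b(m,\ell)$. Exploiting the self-similar structure of equation~\eqref{eq:maineq_general} — if the smallest parts are grouped by their common value $b^{-k}$, each block of $b$ equal parts $b^{-k}$ can be merged into a single part $b^{-(k-1)}$ — one obtains a functional equation of the form $F_b(x,y)=y\cdot G_b(F_b(x,y^{1/b}))$ or, more usefully, a recursion across ``levels'' $\ell$ that is exactly an infinite transfer-matrix product. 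The key combinatorial lemma here is that peeling off one level multiplies a vector of level-generating-functions by a fixed infinite matrix whose entries are monomials/exponential terms; this is where relation~\eqref{eq:ws_equation} (which reduces $\W_b(s,n)$ to $\W_b(1,n)$) gets used to pass from $s=1$ to general $s$, since the $s$-fold convolution in~\eqref{eq:ws_equation} simply raises the relevant generating function to the $s$th power, contributing the factor $\alpha^s$ and the polynomial degree $s-1$ from an $(s-1)$-fold derivative / partial fraction.

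Next I would show that the generating function $H_b(x):=\sum_m (q_b(m)/n!)\,x^m$ (with $n=(b-1)m+1$) is meromorphic in a disk of radius strictly larger than its smallest singularity, with a unique dominant singularity that is a simple pole at $x = 1/\gamma_b$. The radius-of-convergence value $1/\gamma_b$ is identified via the transfer operator: $\gamma_b$ is the reciprocal of the relevant eigenvalue (spectral radius) of the limiting transfer matrix, which reconciles with Molteni's description in Theorem~\ref{thm:molteni} and pins down $\gamma = 1.1926743412\dots$ for $b=2$. Simplicity of the pole, and the fact that the next singularity is at radius $\ge \gamma_b/\kappa_b$ for some explicit $\kappa_b<1$, then yields
\begin{equation*}
  \frac{q_b(m)}{n!} = \alpha_b \gamma_b^{m}\bigl(1 + \Oh{\kappa_b^{m}}\bigr)
\end{equation*}
by standard transfer from meromorphic generating functions (a single residue computation gives $\alpha_b = -\operatorname{Res}_{x=1/\gamma_b} H_b(x)\cdot\gamma_b$, and for $b=2$ one checks $\alpha = 0.2963720490\dots$ and $\kappa = 2/(3\gamma)$). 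For the general-$s$ statement, one takes $H_b(x)^s$ (by~\eqref{eq:ws_equation}), which has a pole of order $s$ at $x=1/\gamma_b$; expanding the Laurent series there produces the polynomial $P_{b,s}(m)$ of degree $s-1$ with leading coefficient $\alpha_b^s/(s-1)!$, again with the same error term $\Oh{\kappa_b^m}$ coming from the secondary singularities.

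The main obstacle I anticipate is the analytic control of the \emph{infinite} transfer matrix: one must make precise the sense in which the infinite product of level matrices converges, show that the associated operator is compact (or trace-class) on a suitable weighted sequence space so that it has a genuine dominant eigenvalue with a spectral gap, and then prove that the resulting generating function extends meromorphically past $|x|=1/\gamma_b$ with only the claimed pole. Establishing the spectral gap quantitatively — which is exactly what produces the explicit $\kappa_b$, and in particular the clean value $\kappa = 2/(3\gamma)$ in the binary case — is the delicate point; it presumably requires identifying the second eigenvalue of the limiting operator and bounding the tail of the matrix product. Once that functional-analytic groundwork is in place, the passage to asymptotics (simple pole $\Rightarrow$ pure exponential growth; order-$s$ pole $\Rightarrow$ polynomial-times-exponential) and the reduction from $s$ to $1$ via~\eqref{eq:ws_equation} are routine, so essentially all the work is concentrated in the transfer-operator analysis.
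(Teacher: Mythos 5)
Your combinatorial setup (level-by-level recursion encoded in an infinite transfer matrix, and the reduction of general $s$ to $s=1$ via \eqref{eq:ws_equation}, giving an $s$-th power of the generating function, a pole of order $s$, and the leading term $\alpha^s m^{s-1}/(s-1)!$) is in the same spirit as the paper, and that part of your plan is sound. However, the step you yourself flag as the ``delicate point'' is a genuine gap, and the route you propose for it is precisely the one that does not work here. You plan to realize the infinite transfer matrix as a compact (or trace-class) operator on a suitable weighted sequence space, obtain a dominant eigenvalue with a spectral gap, and deduce meromorphic continuation with $\gamma_b$ as the reciprocal spectral radius and $\kappa_b$ coming from the second eigenvalue. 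But the entries of $\vc{M}(x)$ grow without bound along each row, so the matrix is not even a bounded $\ell_p$-operator (this is exactly why the paper notes that the general theorems of Drmota, Gittenberger and Morgenbesser are not applicable), and you give no construction of a weighted space on which compactness and a quantitative spectral gap could be established. Without that, you have not shown that the generating function extends meromorphically beyond its radius of convergence, that the dominant singularity is a simple pole, or that there is any $\kappa_b<1$; the entire asymptotic conclusion rests on these unproven facts.

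The paper avoids operator theory altogether. It proves by elementary coefficient estimates (convexity bounds on factorials, the bound on partitions into distinct parts, $h\leq\sqrt{2n}$) that the formal determinants $\f{T}{x}=\det(\vc{I}-\vc{M}(x))$ and the cofactor $\f{S}{x}$ have superexponentially decaying coefficients, hence are entire (Lemma~\ref{lem:bound-det-coeff}), so $\f{Q}{x}=1+\frac{x}{b!}\f{S}{x}/\f{T}{x}$ is globally meromorphic. The dominant singularity is then located by positivity: a combinatorial lower bound forces a finite radius of convergence, Pringsheim puts a singularity on the positive axis, an eigenvector argument shows all $\f{Q_{br}}{x}$ share the same pole order, and splitting off the first row/column gives $\f{Q_b}{x}=\frac{x}{b!}\bigl(1-\f{R}{x}\bigr)^{-1}$ with $\f{R}{x}$ having nonnegative coefficients, whence the dominant singularity is the unique positive solution of $\f{R}{x}=1$, is a simple pole, and strictly dominates all other singularities (Lemma~\ref{lem:poles}); the explicit value $\kappa=2/(3\gamma)$ for $b=2$ comes from a certified Rouch\'e computation showing $T$ has only one zero in $\abs{x}<\tfrac32$ (Lemma~\ref{lem:zeros-det-b2}), not from a second eigenvalue. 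Also, your bivariate function marking the largest denominator is unnecessary for this theorem (the paper uses such a refinement only for Theorem~\ref{thm:largest}), and the proposed functional equation $F_b(x,y)=y\,G_b(F_b(x,y^{1/b}))$ is not a correct rendering of the recursion, which is a linear infinite system rather than a self-substitution.
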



The key idea is to equip every \emph{partition} of $1$ into powers of $2$ (or
generally $b$) with a weight that essentially gives the number of ways it can
be permuted to a composition, and to apply the recursive approach that was used
to count partitions of $1$: if $p_2(n)$ denotes the number of such partitions
into $n$ summands, then the remarkable generating function identity
\begin{equation}\label{eq:partitionsgf}
  \sum_{n=1}^{\infty} p_2(n)x^n = 
  \frac{\sum_{j=0}^{\infty} (-1)^j x^{2^j-1} 
    \prod_{i=1}^j \frac{x^{2^i-1}}{1-x^{2^i-1}}}{\sum_{j=0}^{\infty} (-1)^j 
    \prod_{i=1}^j \frac{x^{2^i-1}}{1-x^{2^i-1}}}
\end{equation}
holds, and this can be generalised to arbitrary bases $b$, see the recent paper of
Elsholtz, Heuberger and
Prodinger~\cite{Elsholtz-Heuberger-Prodinger:2013:huffm}. In our case, we do
not succeed to obtain a similarly explicit formula for the generating function,
but we can write it as the quotient of two determinants of infinite matrices
and infer analytic information from it. The paper is organised as follows: we
first describe the combinatorial argument that yields the generating function,
a priori only within the ring of formal power series. We then study the
expression obtained for the generating function in more detail to show that it
can actually be written as the quotient of two entire functions. The rest of
the proof is a straightforward application of residue calculus (using the
classical Flajolet--Odlyzko singularity
analysis~\cite{Flajolet-Odlyzko:1990:singul}).


Furthermore, we consider the maximum of $\f{\U_b}{\ell,n}$ over all $\ell$, for
which we write
\begin{equation*}
  \f{M_b}{n} = \max_{\ell \geq 1}\, \f{\U_b}{\ell,n} 
  = \max_{s \geq 1}\, \f{\W_b}{s,n}.
\end{equation*}
This means that $\f{M_b}{n}$ is the maximum possible number of representations
of a positive integer as a sum of exactly $n$ powers of $b$. Equivalently,
it is the largest coefficient in the power series expansion of
\begin{equation*}
  \big( x + x^b + x^{b^2} + \cdots \big)^n.
\end{equation*}
When $b=2$, Molteni~\cite{Molteni:2012:repr-2-powers-asy} obtained the
following bounds for this quantity:
\begin{equation*}
  (1.75218)^n \ll \f{M_2}{n}/n! \leq (1.75772)^n.
\end{equation*}
The gap between the two estimates is already very small; we improve this a
little further by providing the constant of exponential growth as well as a
precise asymptotic formula.


\begin{thm}\label{thm:maximum}
  For a certain constant $\nu = 1.7521819\ldots$ (defined precisely in
  Section~\ref{sec:thm-max}), we have
  \begin{equation*}
    \f{M_2}{n}/n! \leq \nu^n
  \end{equation*}
  for all $n \geq 1$, and the constant is optimal: we have the more precise
  asymptotic formula
  \begin{equation*}
    \f{M_2}{n}/n! \sim \lambda n^{-1/2} \nu^n
  \end{equation*}
  with $\lambda = 0.2769343\ldots$.
\end{thm}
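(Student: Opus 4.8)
The proof rests on the generating function
\[
  f(x) = \sum_{k \geq 1} \frac{\f{\W_2}{1,k}}{k!}\, x^k .
\]
By~\eqref{eq:ws_equation} we have $\sum_{n \geq 1} \frac{\f{\W_2}{s,n}}{n!}\, x^n = f(x)^s$, so that $\f{M_2}{n}/n! = \max_{1 \leq s \leq n} [x^n] f(x)^s$, where the restriction $s \leq n$ holds because $f(x)^s = x^s + \cdots$ vanishes to order $s$ at the origin. By Theorem~\ref{thm:asymptotics}, $\f{\W_2}{1,k}/k! \sim \alpha\gamma^{k-1}$, so $f$ has radius of convergence $1/\gamma$, is continuous and strictly increasing on $(0,1/\gamma)$ with $f(0^+)=0$, and $f(x)\to+\infty$ as $x\to(1/\gamma)^-$ (the terms of $\sum_k (\f{\W_2}{1,k}/k!)\gamma^{-k}$ tend to $\alpha/\gamma\neq0$). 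Hence there is a unique $r_0\in(0,1/\gamma)$ with $f(r_0)=1$, and I put $\nu:=1/r_0$ and $\rho_0:=1/(r_0 f'(r_0))$. The upper bound is then immediate from positivity: since $f(x)^s$ has non-negative coefficients and $r_0$ lies strictly inside its disk of convergence, $[x^n]f(x)^s \leq f(r_0)^s/r_0^n = \nu^n$ for every $s$, whence $\f{M_2}{n}/n! \leq \nu^n$ for all $n\geq1$.

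For the matching asymptotics I would apply the saddle-point method to $[x^n]f(x)^s = \frac{r_0^{-n}}{2\pi}\int_{-\pi}^{\pi} f(r_0 e^{i\theta})^s e^{-in\theta}\,d\theta$. Every $\f{\W_2}{1,k}$ is positive, so the coefficients of $f$ are supported on all of $\N_0\setminus\{0\}$, giving $|f(r_0 e^{i\theta})| < f(r_0) = 1$ for $\theta\in(0,2\pi)$, and the integrand concentrates at $\theta=0$. Writing $\psi(x)=xf'(x)/f(x)$, the stationary-point condition reads $\psi(r_0)=n/s$, i.e.\ $s=\rho_0 n$ (using $\psi(r_0)=r_0 f'(r_0)$), and the quadratic term produces a Gaussian of variance $s\,r_0\psi'(r_0)$, where $\psi'(r_0)>0$ because $t\mapsto\log f(e^t)$ is strictly convex. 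Choosing $s=s_n$ to be the nearest integer to $\rho_0 n$, the standard estimate gives
\[
  [x^n]f(x)^{s_n} = \frac{1}{\sqrt{2\pi\rho_0\, r_0\psi'(r_0)}}\, n^{-1/2}\nu^n\,(1+o(1)),
\]
so with $\lambda := (2\pi\rho_0\, r_0\psi'(r_0))^{-1/2} = \bigl(f'(r_0)/(2\pi\psi'(r_0))\bigr)^{1/2}$ (using $\rho_0 r_0 = 1/f'(r_0)$) we obtain $\f{M_2}{n}/n! \geq (\lambda+o(1))\,n^{-1/2}\nu^n$, and a numerical evaluation yields $\lambda = 0.2769343\ldots$.

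To see that no other $s$ does asymptotically better — so that the lower bound is an equality — let $r(s)\in(0,1/\gamma)$ solve $\psi(r(s))=n/s$ and set $G(s):=s\log f(r(s))-n\log r(s)$. Then $G'(s)=\log f(r(s))$ and $G''(s)<0$ (of order $1/n$), so $G$ is strictly concave on $(0,n)$ and maximised at $s=\rho_0 n$, where $r(s)=r_0$ and $G=n\log\nu$; hence $[x^n]f(x)^s \leq \exp(G(s)) \leq \nu^n$, with $\exp(G(s)) \leq \nu^n\exp(-c(s-\rho_0 n)^2/n)$ near the maximum. Combined with a uniform-in-$s$ bound of order $s^{-1/2}$ for the fluctuation integral, this shows that only $s = \rho_0 n + \Oh{\sqrt n}$ contributes at the scale $n^{-1/2}\nu^n$ and caps all such terms by $(\lambda+o(1))\,n^{-1/2}\nu^n$, so that $\f{M_2}{n}/n! \sim \lambda\,n^{-1/2}\nu^n$. (As a cross-check, summing the geometric series gives $\sum_{s\geq1}[x^n]f(x)^s = [x^n]\tfrac{f(x)}{1-f(x)} \sim \tfrac{\nu}{f'(r_0)}\,\nu^n$ by a single-pole singularity analysis at $x=r_0$, confirming that $\nu$ is the exponential growth rate and that the largest term carries a $\Theta(n^{-1/2})$ proportion of the total.)

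I expect the main obstacle to be exactly this last step: establishing the saddle-point estimate with sufficient uniformity in the pair $(n,s)$ — in particular for $s$ at distance $\Theta(\sqrt n)$ from $\rho_0 n$, where the decaying exponential factor $\exp(G(s))$ trades off against the polynomial prefactor of order $s^{-1/2}$ — so that the maximum over the \emph{integer} values of $s$ is genuinely identified, and not merely bounded up to a multiplicative constant.
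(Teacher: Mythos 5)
Your proposal is correct and reaches the same constants as the paper, but by a genuinely different technical route. You work directly with the univariate function $W(x)=\sum_k \mathcal{W}_2(1,k)x^k/k!$ (your $f$): the upper bound $[x^n]W(x)^s\le W(\theta)^s\theta^{-n}=\nu^n$ is identical to the paper's Lemma~\ref{lem:M-upper-bound}, and for the asymptotics you run a ``large powers'' saddle-point analysis of $[x^n]W(x)^s$ in $x$, combined with a concavity argument in $s$ to locate and cap the maximizing $s$ near $\rho_0 n$. The paper instead packages the whole problem into the bivariate meromorphic generating function $G(x,u)=1/(1-uW(x))$, extracts $g_n(u)$ by a residue computation, and invokes Hwang's \emph{local} quasi-power theorem (Theorem~IX.14 of Flajolet--Sedgewick) to obtain a Gaussian local limit law for $\mathcal{W}_2(s,n)/n!$ in $s$, from which the maximum at $s=\mu n+\Oh{1}$ and the value $\lambda n^{-1/2}\nu^n$ follow at once; Lemmas~\ref{lem:alpha-minimal} and~\ref{lem:alpha-dominant-root} supply exactly the dominance and aperiodicity facts you use to confine the saddle to $\theta$. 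Your constants match: your $\rho_0=1/(\theta W'(\theta))$ is the paper's $\mu$, and $2\pi\rho_0\,\theta\psi'(\theta)=2\pi\bigl(1+\theta W''(\theta)/W'(\theta)-\theta W'(\theta)\bigr)=2\pi\theta^2W'(\theta)^2\sigma^2$, so your $\lambda$ equals the paper's $(\theta W'(\theta)\sigma\sqrt{2\pi})^{-1}$ (the factor $b-1$ being $1$ here). The uniformity in $(n,s)$ that you correctly flag as the delicate step is precisely what the local quasi-power/large-powers theorems provide off the shelf, so your plan closes once you cite or reprove such a uniform statement; the paper's route has the additional advantage of handling general bases $b$, where the periodicity constraint $s\equiv n\bmod(b-1)$ must be built in (for $b=2$, as in Theorem~\ref{thm:maximum}, this issue is vacuous, so your aperiodicity argument suffices).
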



Again, Theorem~\ref{thm:maximum} holds for arbitrary integer bases $b\geq2$ for
some constants $\nu=\nu_b$ and $\lambda=\lambda_b$ (it will be explained
precisely how they are obtained). This is formulated as
Theorem~\ref{thm:maximum-full} in Section~\ref{sec:thm-max}.


The final section contains the analysis of some parameters. We study the
exponent of the largest denominator and the number of distinct parts in a
composition of~$1$. In both cases a central limit theorem is shown; mean and
variance are linear in the number of summands, cf.\@ Theorems~\ref{thm:largest}
and~\ref{thm:distinct}.


\section{The Recursive Approach}
\label{sec:rec}

For our purposes, it will be most convenient to work in the setting of
compositions of $1$, i.e., we are interested in the number $q_b(m)$ of
(ordered) solutions to the Diophantine equation~\eqref{eq:maineq_general},
where $n = (b-1)m+1$, as explained in the introduction. Our first goal is to
derive a recursion for $q_b(m)$ and some related quantities, which leads to a
system of functional equations for the associated generating functions.

Let $\vc{k} = (k_1,k_2,\ldots,k_n)$ be a solution to the Diophantine
equation~\eqref{eq:maineq_general} with $k_1 \geq k_2 \geq \cdots \geq k_n$. We
will refer to such an $n$-tuple as a ``partition'' (although technically the
$k_i$ are only the exponents in a partition). We denote by $\co(\vc{k})$ the
number of ways to turn it into a composition. If $w_0$ is the number of zeros,
$w_1$ the number of ones, etc.\@ in $\vc{k}$, then we clearly have
\begin{equation*}
\co(\vc{k}) = \frac{n!}{\prod_{j \geq 0} w_j!}.
\end{equation*}
The \emph{weight} of a partition $\vc{k}$, denoted by $\wt(\vc{k})$, is now
simply defined as
\begin{equation*}
\wt(\vc{k}) = \frac{1}{\prod_{j \geq 0} w_j!} = \frac{\co(\vc{k})}{n!}.
\end{equation*}
Now let 
\begin{multline*}
  \Pa_m = \Big\{ \vc{k} = (k_1,k_2,\dots,k_n) \,\Big\vert\,
    \text{$n = (b-1)m+1$, } \\
    \text{$b^{-k_1}+b^{-k_2} + \dots + b^{-k_n} = 1$,
    $k_1 \geq k_2 \geq \dots \geq k_n$} \Big\}
\end{multline*}
be the set of all partitions of $1$ with $n = (b-1)m+1$ terms and, likewise,
\begin{equation*}
  \Co_m = \Big\{ \vc{k} = (k_1,k_2,\dots,k_n) \,\Big\vert\,
    \text{$n = (b-1)m+1$, $b^{-k_1}+b^{-k_2} + \dots + b^{-k_n} = 1$} \Big\}
\end{equation*}
the set of compositions. We obtain the formula
\begin{equation*}
  q_b(m) = \card*{\Co_{m}} 
  = \sum_{\vc{k} \in \Pa_m} \co(\vc{k}) = n!\, \sum_{\vc{k} \in \Pa_m} \wt(\vc{k})
\end{equation*}
for their number.

Our next step involves an important observation that is also used to obtain the
generating function~\eqref{eq:partitionsgf}. Consider an element $\vc{k}$ of
$\Pa_m$, and let $r$ be the number of times the greatest element $k_1$ occurs
(i.e., $k_1 = k_2 = \cdots = k_r > k_{r+1}$). This number must be divisible by
$b$ (as can be seen by multiplying~\eqref{eq:maineq_general} by $b^{k_1}$)
unless $\vc{k}$ is the trivial partition, so we can replace them by $r/b$
fractions with denominator $b^{k_1-1}$.

This process can be reversed. Given a partition $\vc{k}$ in which the largest
element occurs $r$ times, we can replace $s$, $1 \leq s \leq r$, of these
fractions by $bs$ fractions with denominator $b^{k_1+1}$. This recursive
construction can be illustrated nicely by a tree structure as in
Figure~\ref{fig:tree} for the case $b=2$. Each partition corresponds to a
so-called canonical tree (see \cite{Elsholtz-Heuberger-Prodinger:2013:huffm}),
and vice versa. Note that if $\vc{k} \in \Pa_m$, then the resulting partition
$\vc{k'}$ lies in $\Pa_{m+s}$, and we clearly have
\begin{equation}\label{eq:weightrel}
\wt(\vc{k'}) = \wt(\vc{k}) \cdot \frac{r!}{(r-s)!\,(bs)!}.
\end{equation}

\begin{figure}[htbp]\centering
\begin{tikzpicture}[vertex/.style={circle,draw=black,fill=black,
         inner sep=0pt,minimum size=1mm},
       edge/.style={draw=black}, xscale=0.1, yscale=0.7]
   \draw (0,0) node  (R) [vertex] {};
   \draw (-8,-1) node (R0) [vertex] {};
   \draw (8,-1) node (R1) [vertex] {};
   \draw[edge] (R)--(R0);
   \draw[edge] (R)--(R1);
   \draw (-12,-2) node (R00) [vertex, label=below:$\frac14$] {};
   \draw (-4,-2) node (R01) [vertex, label=below:$\frac14$] {};
   \draw (4,-2) node (R10) [vertex, label=below:$\frac14$] {};
   \draw (12,-2) node (R11) [vertex] {};
   \draw[edge] (R0)--(R00);
   \draw[edge] (R0)--(R01);
   \draw[edge] (R1)--(R10);
   \draw[edge] (R1)--(R11);
   \draw (10,-3) node (R110) [vertex, label=below:$\frac18$] {};
   \draw (14,-3) node (R111) [vertex, label=below:$\frac18$] {};
   \draw[edge] (R11)--(R110);
   \draw[edge] (R11)--(R111);
 \end{tikzpicture}
 \caption{The canonical tree associated with the partition $1 =
   3\cdot2^{-2} + 2\cdot 2^{-3}$ of $1$ into powers of $2$. This partition has weight $\frac1{12}$ and corresponds to $10$ distinct compositions.}\label{fig:tree}
\end{figure}
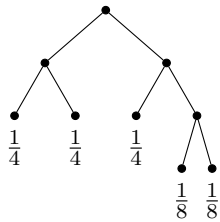

Now we can turn to generating functions. Let $\Pa_{m,r}$ be the subset of
$\Pa_m$ that only contains partitions for which $k_1 = k_2 = \cdots = k_r >
k_{r+1}$ (i.e., in~\eqref{eq:maineq_general}, the largest exponent occurs
exactly $r$ times), and let $\Co_{m,r}$ be the set of compositions obtained by
permuting the terms of an element of $\Pa_{m,r}$. We define a generating
function by
\begin{equation*}
  \f{Q_r}{x} = \sum_{m \geq 0} \frac{\card*{\Co_{m,r}}}{((b-1)m+1)!} x^m 
  = \sum_{m \geq 0} \sum_{\vc{k} \in \Pa_{m,r}} \frac{\co(\vc{k})}{((b-1)m+1)!} x^m 
  = \sum_{m \geq 0} \sum_{\vc{k} \in \Pa_{m,r}} \wt(\vc{k}) x^m.
\end{equation*}
We have $\f{Q_1}{x} = 1$ and $\f{Q_r}{x} = 0$ for all other $r$ not divisible by $b$. Moreover, for all $s \geq 1$ the recursive relation described above and in
particular~\eqref{eq:weightrel} yield
\begin{equation}\label{eq:genfun_id}
  \begin{split}
    \f{Q_{bs}}{x} &= \sum_{m \geq 0} \sum_{\vc{k'} \in \Pa_{m,bs}} \wt(\vc{k'})
    x^m = \sum_{r \geq s} \sum_{m \geq s} \sum_{\vc{k} \in \Pa_{m-s,r}} \wt(\vc{k})
    \frac{r!}{(r-s)!\,(bs)!}x^m \\
    &= x^s \sum_{r \geq s} \frac{r!}{(r-s)!\,(bs)!}  \sum_{m \geq s}
    \sum_{\vc{k} \in \Pa_{m-s,r}} \wt(\vc{k})x^{m-s} = x^s \sum_{r \geq s}
    \frac{r!}{(r-s)!\,(bs)!} \f{Q_r}{x}.
  \end{split}
\end{equation}
This can be seen as an infinite system of linear equations. Define the infinite
(column-)vector $\vc{V}(x) =
(\f{Q_b}{x},\f{Q_{2b}}{x},\f{Q_{3b}}{x},\ldots)^T$, and the infinite matrix
$\vc{M}(x)$ by its entries
\begin{equation*}
  m_{ij} = \begin{cases} 
    \frac{(bj)!\,x^i}{(bj-i)!\,(bi)!} & \text{if } i \leq bj, \\ 
    0 & \text{otherwise.}
  \end{cases}
\end{equation*}
Then the identity~\eqref{eq:genfun_id} above turns into the matrix identity
\begin{equation}\label{eq:matrixid}
  \vc{V}(x) = \vc{M}(x) \vc{V}(x) + \frac{x}{b!} \vc{e}_1,
\end{equation}
where $\vc{e}_1 = (1,0,0,\ldots)^T$ denotes the first unit vector. Within the
ring of formal power series, this readily yields
\begin{equation}\label{eq:vexplicit}
  \vc{V}(x) = \frac{x}{b!} (\vc{I}-\vc{M}(x))^{-1} \vc{e}_1,
\end{equation}
and the generating function 
\begin{equation*}
  \f{Q}{x} = \sum_{r \geq 1} \f{Q_r}{x} 
  = \sum_{m \geq 0} \frac{q_b(m)}{((b-1)m+1)!} x^m
\end{equation*}
(recall that $q_b(m)$ is the number of compositions of $1$ into $n = (b-1)m+1$
powers of $b$) is given by
\begin{equation*}
  \f{Q}{x} = 1 + \vc{1}^T \vc{V}(x) 
  = 1 + \frac{x}{b!} \vc{1}^T (\vc{I}-\vc{M}(x))^{-1} \vc{e}_1.
\end{equation*}
For our asymptotic result, we will need the dominant singularity of $\f{Q}{x}$,
i.e., the zero of $\det(\vc{I}-\vc{M}(x))$ that is closest to $0$. A priori, it is not
even completely obvious that this determinant is well-defined, but the
reasoning is similar to a number of comparable problems.

As mentioned earlier, the determinant $\f{T}{x} = \det(\vc{I}-\vc{M}(x))$
exists a priori within the ring of formal power series, as the limit of the
principal minor determinants. We can write it as
\begin{equation}\label{eq:detexpression}
  \det(\vc{I}-\vc{M}(x)) = \sum_{h\geq0} (-1)^h 
  \sum_{\substack{1\leq i_1<i_2<\dots<i_h \\ i_1,\dots,i_h\in\N}} x^{i_1+i_2+\cdots+i_h} 
  \sum_{\sigma} (\sgn \sigma) 
  \prod_{k=1}^h \frac{(b\f{\sigma}{i_k})!}{(b\f{\sigma}{i_k}-i_k)!\,(bi_k)!},
\end{equation}
where the inner sum is over all permutations~$\sigma$ of
$\{i_1,i_2,\ldots,i_h\}$.  Using Eaves' sufficient condition,
cf.~\cite{Eaves:1970}, we get at least convergence for $\abs{x}<1$.

We can even
show that the formal power series~$T$ given by~\eqref{eq:detexpression} defines an
entire function. This is proven in Section~\ref{sec:bounds}. 
The same is true (by the same argument) for
\begin{equation*}
  \f{S}{x} = \vc{1}^T \operatorname{adj}(\vc{I}-\vc{M}(x)) \vc{e}_1 
  = \det(\vc{M}^*(x)),
\end{equation*}
where $\vc{M}^*$ is obtained from $\vc{I}-\vc{M}(x)$ by replacing the first row
by $\vc{1}$. Hence we can write the generating function $\f{Q}{x}$ as
\begin{equation}\label{eq:gen-A-num-den}
  \f{Q}{x} = 1 + \frac{x}{b!} \frac{\f{S}{x}}{\f{T}{x}},
\end{equation}
where $\f{S}{x}$ and $\f{T}{x}$ are both entire functions. The singularities of
$\f{Q}{x}$ are thus all poles, and it remains to determine the dominant
singularity, i.e., the zero of $T(x) = \det(\vc{I}-\vc{M}(x)) $ with smallest
modulus.


\section{Bounds and Entireness}
\label{sec:bounds}


In this section the two formal power series
\begin{equation*}
  \f{T}{x} = \sum_{n\geq 0} t_n x^n
  = \det(I-\vc{M}(x))
\end{equation*}
and
\begin{equation*}
  \f{S}{x} = \sum_{n\geq 0} s_n x^n
  = \vc{1}^T \operatorname{adj}(I-\vc{M}(x)) \vc{e}_1
\end{equation*}
of Section~\ref{sec:rec} (in particular cf.\ Equations~\eqref{eq:detexpression}
and~\eqref{eq:gen-A-num-den}) are analyzed. Other (similar) functions arising
on the way can be dealt with in a similar fashion.

Note that $\f{S}{x}$ is the
determinant of a matrix, which is obtained by replacing the first row of
$I-\vc{M}(x)$ by $\vc{1}$.


We find bounds for the coefficients $t_n$ and $s_n$, which will be needed for
numerical calculations with guaranteed error estimates as well. Further, those
bounds will tell us that the two functions $\f{T}{x}$ and $\f{S}{x}$ are
entire.


\begin{lemma}\label{lem:bound-det-coeff}
  The coefficients $t_n$ satisfy the bound
  \begin{equation*}
    \abs{t_n} \leq \exp\left(-\frac{b-1}{2}n\log n - cn + n\f{g}{n} \right)
  \end{equation*}
  with $c = (b-1)\left(\log\frac{b-1}{\sqrt{2}}-1\right)$ and with a
decreasing function $\f{g}{n}$, which tends to zero as
  $n\to\infty$. In particular, the formal power series $T$ defines an entire
  function. The same is true for the formal power series~$S$. More precisely,
  we have
  \begin{equation*}
    \abs{s_n} \leq \left((b-1)!+1\right) 
    \exp\left(-\frac{b-1}{2}n\log n - cn + (n+1) \f{g}{n} \right).
  \end{equation*}
\end{lemma}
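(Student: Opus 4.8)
The plan is to extract each coefficient $t_n$ from the explicit determinant expansion \eqref{eq:detexpression} and bound it term by term. Fixing the total degree $i_1+i_2+\dots+i_h = n$ (with $1 \le i_1 < i_2 < \dots < i_h$ distinct positive integers), the contribution to $t_n$ is a signed sum over permutations $\sigma$ of $\{i_1,\dots,i_h\}$ of products $\prod_{k=1}^h \frac{(b\sigma(i_k))!}{(b\sigma(i_k)-i_k)!\,(bi_k)!}$, where a factor is understood to vanish unless $b\sigma(i_k) \ge i_k$. The first step is to find a clean upper bound for a single such product. Writing $j = \sigma(i_k)$ and $i = i_k$, I would bound $\frac{(bj)!}{(bj-i)!\,(bi)!}$ by $\binom{bj}{i}\big/ \frac{(bi)!}{(bj-i)!\,(bj-i)!}$-type manipulations, or more directly: $\frac{(bj)!}{(bj-i)!} = bj \cdot (bj-1) \cdots (bj-i+1) \le (bj)^i$, so the product is at most $\prod_k \frac{(b\sigma(i_k))^{i_k}}{(bi_k)!}$. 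Taking absolute values and summing over all $\sigma$ (there are $h!$ of them, and by the AM–GM / rearrangement heuristic the permutation contributions are all comparable) gives, after using $\sum_k i_k = n$, a bound of roughly $h! \cdot b^n \cdot \frac{(\text{something like } n^n)}{\prod_k (bi_k)!}$, which is then summed over all choices of the index set $\{i_1,\dots,i_h\}$ and over $h$.

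The heart of the estimate is the combinatorial optimization: over all ways of writing $n = i_1 + \dots + i_h$ into \emph{distinct} positive parts, how large can the resulting bound be? The factor $\prod_k (bi_k)!$ in the denominator is minimized (making the bound largest) when the parts are as small and as equal as possible — i.e., close to $\{1,2,\dots,h\}$ with $h \approx \sqrt{2n}$ — and this is exactly where the factor $\exp(-\tfrac{b-1}{2} n \log n)$ is born: Stirling applied to $\prod_{k=1}^{h}(bk)!$ with $h \sim \sqrt{2n}$ yields $\log \prod_k (bk)! \sim \tfrac{b}{2} n \log n$ to leading order, and the competing numerator $b^n \cdot (\text{polynomial-in-}n)^n$ contributes only $O(n \log b)$ plus the $\tfrac{1}{2} n \log n$ coming from the $(bj)^{i}$ factors with $j \lesssim \sqrt{n}$, leaving a net $-\tfrac{b-1}{2} n \log n$. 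Tracking the linear term carefully through Stirling — the $\sqrt{2\pi bk}$ factors, the $e^{-bk}$ factors summed over $k \le h$, and the $b^n$ — produces the constant $c = (b-1)(\log\frac{b-1}{\sqrt 2} - 1)$; everything of lower order (the $h!$, the number $\binom{?}{h}$ of index sets, the $\sqrt{2\pi}$'s, the correction between the true optimal partition and $\{1,\dots,h\}$, and the discrepancy $n \not\equiv \binom{h+1}{2}$) gets absorbed into the term $n\,g(n)$ with $g(n) \to 0$ decreasing. I would state $g$ explicitly (e.g.\ as $O(\log n / \sqrt n)$, or an even more explicit expression if the "guaranteed error estimates" promised in the text demand it) rather than leave it implicit.

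For $S(x)$, the matrix $\vc{M}^*(x)$ differs from $I - \vc{M}(x)$ only in the first row, which is replaced by the all-ones vector $\vc{1}$. Expanding $\det(\vc{M}^*(x))$ along this first row, each term is a $\pm 1$ times a first-row cofactor of $I - \vc{M}(x)$, i.e.\ a determinant of the same shape but with one row and one column deleted. The same term-by-term argument applies verbatim; the only changes are that (i) the removed row index can be any $j \ge 1$, contributing at most a factor $(b \cdot 1)! \cdots$ — in fact the clean bookkeeping shows the overhead is at most a factor $(b-1)! + 1$ and a shift of the degree parameter by one, which is why the exponent carries $(n+1)g(n)$ instead of $ng(n)$ and an extra constant $((b-1)!+1)$ appears out front. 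The main obstacle, as flagged above, is doing the Stirling bookkeeping on $\prod_{k}(bk)!$ and the numerator \emph{sharply enough} to land on the exact advertised constant $c$ with a genuinely decreasing error $g(n)\to 0$; the entireness conclusion itself is then immediate, since $\exp(-\tfrac{b-1}{2}n\log n + O(n)) \cdot R^n \to 0$ for every fixed $R$, so $T$ and $S$ have infinite radius of convergence.
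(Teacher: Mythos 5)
There is a genuine gap in your per-term estimate, and it is exactly at the point you flag as ``the heart of the estimate''. Your bound replaces $\frac{(b\sigma(i_k))!}{(b\sigma(i_k)-i_k)!}$ by $(b\sigma(i_k))^{i_k}$ and then treats the numerator and denominator factorials separately; this is too lossy to produce the advertised constant $c$. The paper's proof instead exploits the exact cancellation $\prod_{k=1}^h \frac{(b\sigma(i_k))!}{(b i_k)!}=1$ (valid because $\sigma$ permutes the set $\{i_1,\dots,i_h\}$), so each term is, up to sign, exactly $\prod_{k=1}^h \frac{1}{(b\sigma(i_k)-i_k)!}$; since $\sum_k\bigl(b\sigma(i_k)-i_k\bigr)=(b-1)n$ and $x\mapsto x(\log x-1)$ is convex, Jensen's inequality together with $h\le\sqrt{2n}$ gives $\prod_k (b\sigma(i_k)-i_k)!\ge\exp\bigl(\tfrac{b-1}{2}n\log n+cn\bigr)$, and this is precisely where $c=(b-1)\bigl(\log\tfrac{b-1}{\sqrt2}-1\bigr)$ comes from. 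Your route cannot reach this constant: take $b=2$, $n=h(h+1)/2$, the index set $\{1,2,\dots,h\}$ and $\sigma=\mathrm{id}$. The exact value of that single term is $\prod_{k\le h}1/k!$, whereas your bound for it is $\prod_{k\le h}(2k)^k/(2k)!$, which exceeds the exact value by $\prod_{k\le h}\prod_{j=1}^{k}\frac{2k}{k+j}=e^{(1-\log 2)n+o(n)}$. Carrying out your own Stirling bookkeeping at this configuration gives $\exp\bigl(-\tfrac12 n\log n+(\tfrac52-\tfrac32\log 2)n+o(n)\bigr)\approx\exp(-\tfrac12 n\log n+1.46\,n)$, while the lemma asserts $\exp\bigl(-\tfrac12 n\log n+(1.3466\ldots+g(n))n\bigr)$ with $g(n)\to0$; the discrepancy $e^{0.11\ldots n}$ on a single term (hence on your whole estimate, which sums nonnegative per-term bounds) cannot be absorbed into $n\,g(n)$. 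So the claim that careful tracking of the linear term ``produces the constant $c$'' is not correct for your inequality: with $(bj)^i$ in place of $(bj)!/(bj-i)!$ the best achievable linear constant is strictly worse than $c$ already for $b=2$.

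The remaining ingredients of your sketch are essentially sound and match the paper: the number of admissible index sets is the number of partitions of $n$ into distinct parts, $e^{O(\sqrt n)}$ (the paper cites Robbins's explicit bound), the factor $h!\le e^{O(\sqrt n\log n)}$ for the permutations is harmless, and your weaker per-term bound would still yield $|t_n|\le\exp\bigl(-\tfrac{b-1}{2}n\log n+O(n)\bigr)$, hence entireness of $T$ and $S$. But the lemma's precise constant $c$ and the decreasing $g(n)\to0$ (which the paper later needs for certified numerical tail bounds) require the cancellation-plus-convexity step. Your treatment of $S$ (expansion along the replaced first row, overhead $(b-1)!+1$ and a degree shift) is in the right spirit --- the paper splits the terms according to $i_1=1$ versus $i_1>1$ and relates the former to coefficients of $T$ of degree $n+1$ via the factor $(b-1)!/\sigma(1)$ --- but it is asserted rather than derived, and in any case it inherits the same deficiency from the bound for $t_n$.
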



\begin{proof}
  Recall expression~\eqref{eq:detexpression} for the determinant, namely
  \begin{equation*}
    \det(\vc{I}-\vc{M}(x)) = \sum_{h\geq0} (-1)^h 
    \sum_{\substack{1\leq i_1<i_2<\dots<i_h \\ i_1,\dots,i_h\in\N}} x^{i_1+i_2+\cdots+i_h} 
    \sum_{\sigma} (\sgn \sigma) 
    \prod_{k=1}^h \frac{(b\f{\sigma}{i_k})!}{(b\f{\sigma}{i_k}-i_k)!\,(bi_k)!}.
  \end{equation*}
  Write
  $n = i_1 + i_2 + \cdots + i_h$ for the exponent of $x$, and note that
  \begin{equation*}
    \prod_{k=1}^h \frac{(b\f{\sigma}{i_k})!}{(bi_k)!} = 1,
  \end{equation*}
  which is independent of the permutation~$\sigma$. We also have
  \begin{equation*}
    \sum_{k = 1}^h (b\f{\sigma}{i_k}-i_k) = (b-1)\sum_{k = 1}^h i_k = (b-1)n.
  \end{equation*}
  Since $a! \geq \exp(a(\log a - 1))$ for all positive integers $a$ and $f(x) =
  x(\log x - 1)$ is a convex function, we have
  \begin{align*}
    \prod_{k = 1}^h (b\f{\sigma}{i_k}-i_k)!
    &\geq \exp \left( \sum_{k=1}^h (b\f{\sigma}{i_k}-i_k) 
      \left( \log (b\f{\sigma}{i_k}-i_k) - 1 \right) \right) \\
    &\geq \exp \left( h \frac{(b-1)n}{h} 
      \left( \log \frac{(b-1)n}{h} - 1 \right) \right) \\
    &= \exp \left( (b-1)n \left( \log \frac{(b-1)n}{h} - 1 \right) \right).
  \end{align*}
  Since $i_1,i_2,\ldots,i_h$ have to be distinct, we also have
  \begin{equation*}
    n = i_1 + i_2 + \cdots + i_h \geq 1 + 2 + \cdots + h = \frac{h(h+1)}{2} \geq \frac{h^2}{2}.
  \end{equation*}
  Thus $h \leq \sqrt{2n}$, which means that
  \begin{equation*}
    \prod_{k = 1}^h (b\f{\sigma}{i_k}-i_k)!
    \geq \exp \left( \frac{b-1}{2}n\log n + 
      (b-1)n \left(\log\frac{b-1}{\sqrt{2}} - 1\right) \right).
  \end{equation*} 
  Now that we have an estimate for each term in~\eqref{eq:detexpression}, let
  us also determine a bound for the number of terms corresponding to each
  exponent $n$. 

  It is well known that the number of partitions $q(n)$ of $n$ into distinct
  parts is asymptotically equal to $\exp\!\big( \pi \sqrt{n/3} + \Oh{\log n}
  \big)$. In Robbins's
  paper~\cite{Robbins:2007:upper-bound-partitions-distinct} we can find the
  explicit upper bound\footnote{Note that in the published version
    of~\cite{Robbins:2007:upper-bound-partitions-distinct} a constant in the
    main theorem is printed incorrectly.}
  \begin{equation*}
    q(n) \leq \frac{\pi}{\sqrt{12n}} 
    \exp\left(\frac{\pi}{\sqrt{3}}\sqrt{n} + \frac{\pi^2}{12}\right).
  \end{equation*}

  For each choice of $\{i_1,i_2,\ldots,i_h\}$, there are at most $h!$
  permutations~$\sigma$ that contribute, which can be bounded by means of
  Stirling's formula (using also $h \leq \sqrt{2n}$ again). This gives
  \begin{equation*}
    h! \leq \exp\left( h \log h - h  + \tfrac12 \log h + 1 \right) 
    \leq \exp \left((\sqrt{2n} + \tfrac12)\log(\sqrt{2n}) - \sqrt{2n}+1\right).
  \end{equation*}
  It follows that the coefficient $t_n$ of $T$ is bounded (in absolute values)
  by
  \begin{multline*}
    \frac{\exp\left(    
        \frac{\pi}{\sqrt{3}}\sqrt{n} + \frac{\pi^2}{12}
        + \log\pi - \frac12 \log (12n) 
        + (\sqrt{2n} + \tfrac12)\log(\sqrt{2n}) - \sqrt{2n}+1
      \right)}{\exp\left(   
        \frac{b-1}{2}n\log n + 
        (b-1)n \left(\log\frac{b-1}{\sqrt{2}} - 1\right)
      \right)} \\
    =\exp\left(-\frac{b-1}{2}n\log n - cn + \Oh{\sqrt{n} \log n} \right),
  \end{multline*}
  which proves the theorem for a suitable choice of $g(n)$. A possible explicit
  bound (relevant for our numerical calculations, see
  Section~\ref{sec:numerical}) is
  \begin{equation*}
    \abs{t_n} \leq \exp\left(-\frac{b-1}{2}n\log n - cn 
      + \sqrt{\frac{n}2}\log n + \sqrt{n} + 3\right).
  \end{equation*}
  Since this bound decays superexponentially, the determinant $T
  =\det(\vc{I}-\vc{M}(x))$ is an entire function.

  The same argument works for $S$. There, we split up into the summands
  where we have $i_1=1$ and all other summands. For the second part (the summands
  with $i_1>1$), the terms are the same as in the determinant that defines $T$, so it is bounded by the same expression.
Each of the summands with $i_1=1$ equals a summand of $\det(I-\vc{M}(x))$ multiplied by the factor
  \begin{equation*}
    -\frac{(b\f{\sigma}{i_1}-i_1)!\,(bi_1)!}{(b\f{\sigma}{i_1})!\, x}
    = -\frac{b!}{x} \frac{(b\f{\sigma}{1}-1)!}{(b\f{\sigma}{1})!}
    = -\frac{(b-1)!}{x \f{\sigma}{1}}
  \end{equation*}
  or is zero (when $\f{\sigma}{i_1}=1$). Therefore, the sum of these  terms can be bounded by $(b-1)!$ times the bound we 
obtained for the coefficient of $x^{n+1}$ in $\det(I-\vc{M}(x))$. This gives us
\begin{align*}
|s_n| &\leq \exp\left(-\frac{b-1}{2}n\log n - cn + n \f{g}{n} \right) \\
&\qquad + (b-1)!\exp\left(-\frac{b-1}{2}(n+1)\log (n+1) - c(n+1) + (n+1) \f{g}{n+1} \right) \\
&\leq (1+(b-1)!)\exp\left(-\frac{b-1}{2}n\log n - cn + (n+1) \f{g}{n} \right),
\end{align*}
which completes the proof.
\end{proof}


Lemma~\ref{lem:bound-det-coeff} immediately yields a simple estimate for the tails of the power series $S$ and $T$.

\begin{lemma}\label{lem:bound-det-tail}
  Let $N\in\N$ and $x\in\C$, and let $c$ and $\f{g}{n}$ be as in
  Lemma~\ref{lem:bound-det-coeff}. Set
  \begin{equation*}
    q = \frac{e^{\f{g}{N}} \abs{x}}{e^c \sqrt{N^{b-1}}}
  \end{equation*}
  and suppose that $q<1$.  Then we have the inequality
  \begin{equation*}
    \abs[Big]{\sum_{n \geq N} t_n x^n} \leq \frac{q^N}{1-q}
  \end{equation*}
  for the tails of the infinite sum in the determinant~$T$. For the tails of
  the determinant $S$, we have the analogous inequality
  \begin{equation*}
    \abs[Big]{\sum_{n \geq N} s_n x^n} \leq ((b-1)!+1)e^{g(N)}\frac{q^N}{1-q}.
  \end{equation*}
\end{lemma}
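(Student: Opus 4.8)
The plan is to deduce Lemma~\ref{lem:bound-det-tail} directly from the coefficient bounds established in Lemma~\ref{lem:bound-det-coeff} by a straightforward geometric-series estimate. First I would recall from Lemma~\ref{lem:bound-det-coeff} that
\begin{equation*}
  \abs{t_n} \leq \exp\!\left(-\tfrac{b-1}{2}n\log n - cn + n\f{g}{n}\right),
\end{equation*}
and observe that this right-hand side can be rewritten as $\bigl(e^{-c}e^{\f{g}{n}} n^{-(b-1)/2}\bigr)^{n} = \bigl(e^{\f{g}{n}}/(e^{c}\sqrt{n^{b-1}})\bigr)^{n}$. Thus $\abs{t_n}\abs{x}^n \le \bigl(e^{\f{g}{n}}\abs{x}/(e^{c}\sqrt{n^{b-1}})\bigr)^n$.

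The key monotonicity step is that for $n \ge N$ the quantity $e^{\f{g}{n}}/(e^{c}\sqrt{n^{b-1}})$ is decreasing in $n$: the factor $\sqrt{n^{b-1}}$ in the denominator is increasing in $n$ (using $b\ge 2$), and $\f{g}{n}$ is decreasing by Lemma~\ref{lem:bound-det-coeff}, so $e^{\f{g}{n}}$ is decreasing as well. Hence for every $n\ge N$ we have $e^{\f{g}{n}}\abs{x}/(e^{c}\sqrt{n^{b-1}}) \le e^{\f{g}{N}}\abs{x}/(e^{c}\sqrt{N^{b-1}}) = q$, and therefore $\abs{t_n}\abs{x}^n \le q^n$. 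Summing the geometric series and using the hypothesis $q<1$ gives
\begin{equation*}
  \abs[Big]{\sum_{n\ge N} t_n x^n} \le \sum_{n\ge N} \abs{t_n}\abs{x}^n \le \sum_{n\ge N} q^n = \frac{q^N}{1-q},
\end{equation*}
which is the first claimed inequality.

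For $S$, I would argue the same way starting from the bound $\abs{s_n} \le ((b-1)!+1)\exp\!\left(-\tfrac{b-1}{2}n\log n - cn + (n+1)\f{g}{n}\right)$ of Lemma~\ref{lem:bound-det-coeff}. Pulling out one extra factor of $e^{\f{g}{n}}$ we get $\abs{s_n} \le ((b-1)!+1)\, e^{\f{g}{n}} \bigl(e^{\f{g}{n}}/(e^{c}\sqrt{n^{b-1}})\bigr)^n$; for $n\ge N$ we bound $e^{\f{g}{n}} \le e^{\f{g}{N}}$ (again by the monotonicity of $\f{g}{}$) and the $n$-th power by $q^n$ exactly as before, yielding $\abs{s_n}\abs{x}^n \le ((b-1)!+1)e^{\f{g}{N}} q^n$ and hence
\begin{equation*}
  \abs[Big]{\sum_{n\ge N} s_n x^n} \le ((b-1)!+1)e^{\f{g}{N}}\frac{q^N}{1-q}.
\end{equation*}

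I do not expect any real obstacle here; the only point that needs a sentence of care is the monotonicity claim that makes $q$ a uniform bound on the per-term ratio for $n\ge N$, and the bookkeeping of the single leftover $e^{\f{g}{n}}$ factor in the $S$-estimate. Everything else is summing a geometric series.
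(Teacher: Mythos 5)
Your proof is correct and follows essentially the same route as the paper: rewrite the coefficient bound of Lemma~\ref{lem:bound-det-coeff} as an $n$-th power of the per-term ratio, use the monotonicity of $\f{g}{n}$ and of $\sqrt{n^{b-1}}$ to bound that ratio uniformly by $q$ for $n\geq N$, and sum the geometric series; the $S$-case with the extra $e^{\f{g}{N}}$ factor is handled exactly as the paper intends by its remark that ``the second inequality follows in the same way.''
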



\begin{proof}
  By Lemma~\ref{lem:bound-det-coeff} we have
  \begin{equation*}
    \abs{t_n} \leq \exp\Big(-\frac{b-1}{2}n\log n - cn + n \f{g}{n} \Big).
  \end{equation*}
Now we use monotonicity to obtain
  \begin{equation*}
    \abs[Big]{\sum_{n \geq N} t_n x^n}
    \leq \sum_{n \geq N} 
    \left(\frac{e^{\f{g}{n}} \abs{x}}{e^c \sqrt{n^{b-1}}}\right)^n
    \leq \sum_{n \geq N} 
    \left(\frac{e^{\f{g}{N}} \abs{x}}{e^c \sqrt{N^{b-1}}}\right)^n 
    = q^N \frac{1}{1-q}.
  \end{equation*}
The second inequality follows in the same way.
\end{proof}


\section{Analyzing the Generating Function}
\label{sec:poles-gf}


Infinite systems of functional equations appear quite frequently in the
analysis of combinatorial problems, see for example the recent work of Drmota,
Gittenberger and
Morgenbesser~\cite{Drmota-Gittenberger-Morgenbesser:2012:inf-systems}. Alas,
their very general theorems are not applicable to our situation as the infinite
matrix~$\vc{M}$ does not represent an $\ell_p$-operator (one of their main
requirements), due to the fact that its entries increase (and tend to $\infty$)
along rows. However, we can adapt some of their ideas to our setting.


The main result of this section is the following lemma.


\begin{lemma}\label{lem:poles}
  For every $b \geq 2$, the generating function $\f{Q}{x}$ has a simple pole at
  a positive real point $\rho_b$ and no other poles with modulus $< \rho_b +
  \epsilon_b$ for some $\epsilon_b > 0$.
\end{lemma}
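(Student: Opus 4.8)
The plan is to show that $\f{Q}{x} = 1 + \frac{x}{b!}\f{S}{x}/\f{T}{x}$ has its dominant singularity at a positive real zero of $\f{T}{x}$, using a Perron--Frobenius-type argument adapted to the infinite matrix $\vc{M}(x)$. Recall from Section~\ref{sec:rec} that $\f{Q}{x}$ is a quotient of entire functions, so all its singularities are poles located at zeros of $\f{T}{x} = \det(\vc{I}-\vc{M}(x))$. First I would restrict attention to $x \in [0, \rho_b]$ for the candidate radius and study the spectral behaviour of $\vc{M}(x)$ as $x$ increases along the positive real axis. The entries $m_{ij}(x) = \frac{(bj)!\,x^i}{(bj-i)!\,(bi)!}$ for $i \le bj$ are all nonnegative for $x > 0$, so $\vc{M}(x)$ is a nonnegative operator, and its entries are strictly increasing in $x$. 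The idea is that $\vc{I} - \vc{M}(x)$ is invertible (equivalently $\f{T}{x} \neq 0$) precisely when the spectral radius of $\vc{M}(x)$ is less than $1$, and this spectral radius increases continuously from $0$ (at $x=0$) to a value $\geq 1$, crossing $1$ at a unique point $\rho_b$.

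The key steps: (1) Establish that for small $x > 0$ the Neumann series $\sum_{k \ge 0} \vc{M}(x)^k$ converges entrywise and represents $(\vc{I}-\vc{M}(x))^{-1}$ with nonnegative entries, so $\f{Q}{x}$ is finite and has nonnegative Taylor coefficients (as it must, being a counting generating function). (2) Use the coefficient bounds of Lemma~\ref{lem:bound-det-coeff} to control $\f{T}{x}$: since $\f{T}{x} = \det(\vc{I}-\vc{M}(x))$ is entire with $\f{T}{0}=1$, and since $q_b(m)/((b-1)m+1)!$ grows exponentially (this follows already from Molteni's Theorem~\ref{thm:molteni}, or from crude lower bounds on $q_b(m)$), the power series $\f{Q}{x}$ has a finite positive radius of convergence $\rho_b$; by Pringsheim's theorem $\rho_b$ itself is a singularity, hence a zero of $\f{T}{x}$, and $\rho_b > 0$. (3) Show $\rho_b$ is a simple zero of $\f{T}{x}$: one monotonicity argument is that $\f{T}{x}$, restricted to $[0,\rho_b]$, is real and positive on $[0,\rho_b)$ (since $(\vc{I}-\vc{M}(x))$ is invertible there with the Neumann series argument) and vanishes at $\rho_b$, so $\f{T}{}$ is nonincreasing near $\rho_b$ from the left; combined with a computation that $\f{T}'(\rho_b) \neq 0$ — which I would get from the fact that the residue of $\f{Q}{x}$ at $\rho_b$ is nonzero, equivalently that $\f{S}{\rho_b} \neq 0$. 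The nonvanishing of $\f{S}$ at $\rho_b$ should follow from a Perron--Frobenius argument: $\f{S}{x}/\f{T}{x}$ is, up to the factor $x/b!$, the entry $\vc{1}^T(\vc{I}-\vc{M}(x))^{-1}\vc{e}_1$, and as $x \uparrow \rho_b$ the resolvent blows up in the direction of the (nonnegative) Perron eigenvector, whose components pair nontrivially with both $\vc{1}$ and $\vc{e}_1$. (4) Rule out other zeros of modulus $\le \rho_b$: if $\f{T}{x_0}=0$ with $|x_0| = \rho_b$, then $\vc{M}(x_0)$ has $1$ as an eigenvalue; but $|m_{ij}(x_0)| = m_{ij}(\rho_b)$, so the spectral radius of $\vc{M}(x_0)$ is at most that of $\vc{M}(\rho_b)$, which equals $1$ with the Perron eigenvalue simple and the associated eigenvector positive — a strict-inequality / irreducibility argument (the matrix $\vc{M}$ has enough nonzero entries to be "irreducible" in the relevant sense) then forces $x_0 = \rho_b$. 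Finally, continuity of the eigenvalues in $x$ gives an $\epsilon_b > 0$ with no further zeros in $|x| < \rho_b + \epsilon_b$.

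The main obstacle is step (4) together with the simplicity in step (3): the infinite matrix $\vc{M}(x)$ is not a compact or $\ell_p$-bounded operator (as the authors themselves note, its row entries tend to infinity), so the classical Perron--Frobenius and Krein--Rutman theorems do not apply off the shelf. I would handle this by truncation: let $\vc{M}_n(x)$ be the $n \times n$ top-left corner, which is a genuine nonnegative (and, one checks, eventually irreducible) matrix with a simple positive Perron eigenvalue $\lambda_n(x)$; show $\lambda_n(x) \uparrow \lambda(x)$ as $n \to \infty$, that $\det(\vc{I}-\vc{M}_n(x)) \to \f{T}{x}$ uniformly on compact sets (using the tail bounds of Lemma~\ref{lem:bound-det-tail}), and transfer the finite-dimensional Perron--Frobenius conclusions — simplicity of the dominant eigenvalue, positivity of the eigenvector, strict domination of the spectral radius over all other spectral points on the circle $|x| = \rho_b$ — to the limit. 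The delicate point is ensuring the convergence $\lambda_n \to \lambda$ is strict enough that $\lambda(\rho_b) = 1$ exactly and that the limiting eigenvector still has the positivity needed to conclude $\f{S}{\rho_b} \neq 0$; here the superexponential decay of the coefficients from Lemma~\ref{lem:bound-det-coeff} is what makes the truncation error negligible and the argument go through.
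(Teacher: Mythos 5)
Your steps (1)--(2) (Pringsheim plus an explicit exponential lower bound to locate a positive real singularity $\rho_b$ of $Q$, hence a zero of $T$) agree with the paper. The genuine gap is in steps (3)--(4), which is exactly where the difficulty lies. Your argument routes everything through Perron--Frobenius statements about the infinite matrix $\vc{M}(x)$: that $T(x)\neq 0$ iff the spectral radius of $\vc{M}(x)$ is $<1$, that $T(x_0)=0$ forces $1$ to be an eigenvalue of $\vc{M}(x_0)$, that the spectral radius of $\vc{M}(\rho_b)$ equals $1$ with a \emph{simple} dominant eigenvalue and a strictly positive eigenvector, and that a Wielandt-type comparison then excludes $|x_0|=\rho_b$, $x_0\neq\rho_b$. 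None of this is available off the shelf here, as you note yourself: $\vc{M}(x)$ is not a bounded operator on any natural sequence space (its entries grow along rows), the determinant $T$ is only defined as a limit of principal minors, and no determinant--spectrum correspondence (Fredholm/trace-class framework) has been established that would justify ``$T(x)=0$ iff $1\in\operatorname{spec}\vc{M}(x)$''. Your proposed fix by truncation does not close the gap: uniform convergence of $\det(\vc{I}-\vc{M}_n(x))$ to $T(x)$ gives, via Hurwitz, that zeros of $T$ are limits of zeros of the truncations, but it does not transfer \emph{simplicity} (zeros of the truncations can merge in the limit), nor does it transfer the spectral gap or the strict-domination statement needed on the circle $|x|=\rho_b$ -- for that you would need the finite-$n$ Perron--Frobenius data (gap, eigenvector positivity) to hold \emph{uniformly in $n$}, and nothing in your sketch provides such uniformity. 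So the two claims that constitute the lemma -- simplicity of the pole and absence of other poles of modulus $\leq\rho_b+\epsilon_b$ -- are exactly the points left unproven.

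For contrast, the paper avoids operator theory altogether. After the Pringsheim step, it sets $w_r=\lim_{x\to\rho_b^-}(\rho_b-x)^pQ_{br}(x)$ (with $p$ the pole order), shows $w_1>0$ directly from the system, and uses the matrix identity only at the level of these scalar limits to conclude that \emph{all} $Q_r$ have the same pole order. It then eliminates all coordinates but the first: writing $\vc{M}$ in block form and solving for the tail vector gives $Q_b(x)=\frac{x}{b!}\bigl(1-R(x)\bigr)^{-1}$ with $R(x)=m_{11}+\vc{r}(\vc{I}-\overline{\vc{M}})^{-1}\vc{c}$, where a pole-order comparison shows $(\vc{I}-\overline{\vc{M}})^{-1}\vc{c}$ is analytic in a disc of radius $>\rho_b$ and $R$ has nonnegative coefficients. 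Then $R(\rho_b)=1$ with $R'(\rho_b)>0$ yields the simple pole, and the triangle inequality applied to $R(x)=1$ excludes other solutions of the same modulus; this also sidesteps the possibility (which your plan must separately exclude) that $S$ and $T$ vanish simultaneously at $\rho_b$. If you want to salvage your route, you would need to build an actual Fredholm-determinant or weighted-space framework in which $\vc{M}(x)$ is a compact positive operator and Krein--Rutman applies; as written, the proposal assumes the conclusions of that theory rather than proving them.
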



\begin{proof}[Proof of Lemma~\ref{lem:poles}]
  First of all, we rule out the possibility that $\f{Q}{x}$ is
  entire by providing a lower bound for the coefficients $q_b(m)$. To this end,
  consider compositions of $1$ consisting of $b-1$ copies of
  $b^{-1},b^{-2},\ldots,b^{1-m}$ and $b$ copies of $b^{-m}$. Since there are
  $\frac{((b-1)m+1)!}{((b-1)!)^{m-1}b!}$ possible ways to arrange them in an
  order, we know that
  \begin{equation*}
    q_b(m) \geq \frac{((b-1)m+1)!}{((b-1)!)^{m-1}b!},
  \end{equation*}
  from which it follows that the radius of convergence of $\f{Q}{x}$ is at most
  $(b-1)!$. Since all coefficients are positive, Pringsheim's theorem
  guarantees that the radius of convergence, which we denote by $\rho_b$, is
  also a singularity.

  We already know that $\f{Q}{x}$ is meromorphic (being the
  quotient of two entire functions), hence $\rho_b$ is a pole singularity. Let
  $p$ be the pole order, and with $Q_{br}$ as in Section~\ref{sec:rec} set
  \begin{equation*}
    w_r = \lim_{x \to \rho_b^-} (\rho_b-x)^p
    \f{Q_{br}}{x},
  \end{equation*}
  which must be nonnegative and real. Moreover, we have
  \begin{equation*}
    \f{Q_b}{x} = \frac{x}{b!} + x \sum_{r \geq 1} 
    \frac{r}{(b-1)!} \f{Q_{br}}{x} 
    \geq \frac{x}{b!} \Big(1  + \sum_{r \geq 1} \f{Q_{br}}{x} \Big)
    = \frac{x}{b!} \f{Q}{x},
  \end{equation*}
  which shows that $w_1$ is even strictly positive. Multiplying the matrix
  equation~\eqref{eq:matrixid} by $(\rho_b-x)^p$ and taking the limit, we see
  that $\vc{w} = (w_1,w_2,\ldots)^T$ is a right eigenvector of
  $\vc{M}(\rho_b)$. Since all entries in $\vc{M}(\rho_b)$ are nonnegative and those on
  and above the main diagonal are strictly positive, it follows that $w_r > 0$
  for all $r$, i.e., all functions $\f{Q_r}{x}$ have the same pole order (as
  $\f{Q}{x}$).

  Now we split the identity~\eqref{eq:matrixid}. Let $m_{11} = x/(b-1)!$ be the
  first entry of $\vc{M}(x)$, $\vc{c}$ the rest of the first column, $\vc{r}$
  the rest of the first row and $\overline{\vc{M}}$ the matrix obtained from
  $\vc{M}$ by removing the first row and the first column. Moreover,
  $\overline{\vc{V}}$ is obtained from $\vc{V}$ by removing the first entry
  $\f{Q_b}{x}$. Now we have
  \begin{equation}\label{eq:a1eq}
    \f{Q_b}{x} = m_{11} \f{Q_b}{x} 
    + \vc{r}\,  \overline{\vc{V}} + \frac{x}{b!}
  \end{equation}
  and
  \begin{equation*}
    \overline{\vc{V}} = \vc{c}  \f{Q_b}{x} 
    + \overline{\vc{M}}\, \overline{\vc{V}},
  \end{equation*}
  from which we obtain
  \begin{equation}\label{eq:vexpl}
    \overline{\vc{V}} = (\vc{I} - \overline{\vc{M}})^{-1} 
     \vc{c}  \f{Q_b}{x}.
  \end{equation}
  Once again, the inverse $(\vc{I} - \overline{\vc{M}})^{-1}$ exists a priori
  in the ring of formal power series, but one can show that $\det(\vc{I} -
  \overline{\vc{M}})$ is in fact an entire function, so the entries of the
  inverse are all meromorphic (see again the calculations in
  Section~\ref{sec:bounds}).
  Moreover, $(\vc{I} - \overline{\vc{M}})^{-1} \vc{c}$ cannot have a
  singularity at $\rho_b$ or at any smaller positive real number, because if
  this was the case, the right hand side of~\eqref{eq:vexpl} would have a higher
  pole order at that point than the left hand side. Since it has positive
  coefficients only (the inverse can be expanded into a geometric series), its
  entries must be analytic in a circle of radius $> \rho_b$ around $0$. Now we
  substitute~\eqref{eq:vexpl} in~\eqref{eq:a1eq} to obtain
  \begin{equation*}
    \f{Q_b}{x}  = m_{11} \f{Q_b}{x} + \vc{r}  (\vc{I} -
    \overline{\vc{M}})^{-1}  \vc{c}  \f{Q_b}{x} + \frac{x}{b!}
  \end{equation*}
  and thus
  \begin{equation*}
    \f{Q_b}{x} = \frac{x}{b!} \left( 1 - m_{11} - \vc{r} 
       (\vc{I} - \overline{\vc{M}})^{-1}  \vc{c} \right)^{-1}.
  \end{equation*}
  Note that
  \begin{equation*}
    \f{R}{x} = m_{11} + \vc{r}  (\vc{I} 
    - \overline{\vc{M}})^{-1}  \vc{c}
  \end{equation*}
  has only positive coefficients, so $\f{R}{x} = 1$ has a unique positive real
  solution, which must be $\rho_b$ (recalling that $\f{R}{x}$ is analytic in a
  circle of radius $> \rho_b$ around $0$). Of course, $\f{R'}{\rho_b} > 0$, so
  its multiplicity is $1$, which means that $\rho_b$ is a simple
  pole. Moreover, by the triangle inequality there are no complex solutions of
  $\f{R}{x} = 1$ with the same modulus, which means that there are no further
  singularities of $\f{Q_b}{x}$ (and thus $\f{Q}{x}$) in a circle of radius
  $\rho_b+\epsilon_b$ around $0$ for suitable $\epsilon_b > 0$.
\end{proof}


\section{Getting the Asymptotics}
\label{sec:asymptotics}


In this section, we prove Theorems~\ref{thm:asymptotics}
and~\ref{thm:asymptotics:general_base}, which give us constants $\alpha_b$,
$\gamma_b$ and $\kappa_b < 1$ such that for $n=(b-1)m+1$
\begin{equation*}
  \frac{\f{\W_b}{s,n}}{n!} = \f{P_{b,s}}{m} \gamma_b^m (1 + \Oh{\kappa_b^m})
\end{equation*}
holds, where $\f{P_{b,s}}{m}$ is a polynomial with leading term $\alpha_b^s
m^{s-1}/(s-1)!$. Numerical values of the $\alpha_b$ and $\gamma_b$ can be found
in Table~\ref{tab:values-thm-asy}. It is explained in the next section how
these numerical values are determined in a reliable way.


\begin{table}[htbp]
  \centering
  \begin{equation*}
    \begin{array}{ccc}
      b & \alpha & \gamma \\
      \hline
      2 & 0.296372 & 1.19268 \\
3 & 0.279852 & 0.534502 \\
4 & 0.236824 & 0.170268 \\
5 & 0.196844 & 0.0419317 \\
6 & 0.165917 & 0.00834837 \\
7 & 0.142679 & 0.00138959 \\
8 & 0.1249575 & 0.000198440 \\

      \hline
    \end{array}
  \end{equation*}
  \caption{Truncated decimal values for the constants of 
    Theorem~\ref{thm:asymptotics:general_base}. See Section~\ref{sec:numerical} for the method of computation.}
  \label{tab:values-thm-asy}
\end{table}


For easier reading, we skip the index~$b$ again, i.e., we set
$\alpha=\alpha_b$, $\gamma=\gamma_b$, and so on. The proof is the same for
all~$b$, except for the fact that different constants occur. 


\begin{proof}[Proof of Theorem~\ref{thm:asymptotics:general_base}] 
  By now,
  we know that the function $\f{Q}{x}$ can be written as the quotient of two
  entire functions, cf.\ Section~\ref{sec:rec} and
  Lemma~\ref{lem:bound-det-coeff}. More specifically, we use
  \begin{equation*}
    Q(x) = 1 + \frac{x}{b!} \frac{\f{S}{x}}{\f{T}{x}}.
  \end{equation*}
  As Lemma~\ref{lem:poles} shows, $Q(x)$ has exactly one pole~$\rho$ (which is
  a simple pole) inside some disc with radius $\rho+\epsilon$, $\epsilon>0$,
  around $0$. Thus we can directly apply singularity
  analysis~\cite{Flajolet-Odlyzko:1990:singul} in the meromorphic setting
  (cf.\@ Theorem IV.10 of~\cite{Flajolet-Sedgewick:ta:analy}) to obtain
  \begin{align*}
    \frac{q_b(m)}{((b-1)m+1)!}
    &= - \frac{\f{S}{\rho}}{b!\f{T'}{\rho}} \rho^{-m}  
    + \Oh{\left(\frac{1}{\rho+\epsilon}\right)^m}.
  \end{align*}
  This finishes the proof for $s=1$. Note that $\gamma=1/\rho$.

  In the general case (arbitrary~$s$), we use the relation
  \begin{equation*}
    \sum_{n=1}^{\infty} \frac{\f{\W_b}{s,n}}{n!} x^n 
    = \bigg( \sum_{n=1}^{\infty} \frac{\f{\W_b}{1,n}}{n!} x^n \bigg)^s,
  \end{equation*}
  which follows from Equation~\eqref{eq:ws_equation} and gives us
  \begin{equation*}
    \sum_{m=0}^{\infty} \frac{\f{\W_b}{s,(b-1)m+s}}{((b-1)m+s)!} x^m 
    = Q(x)^s.
  \end{equation*}
Once again, we make use of the fact here that the (exponential) generating
  function is meromorphic, cf.\@ Section~\ref{sec:rec}. The singular expansion
  of $Q(x)^s$ at $x = \rho = 1/\gamma$ is given by
  \begin{equation*}
Q(x)^s = \Big( \frac{\alpha}{1-\gamma x} 
    + \Oh{1} \Big)^s,
  \end{equation*}
  which has $\alpha^s / (1-\gamma x)^s$ as its main term. Once again,
  singularity analysis~\cite{Flajolet-Odlyzko:1990:singul} yields the desired
  asymptotic formula with main term as indicated in the statement of the
  theorem.\end{proof}


\section{Reliable Numerical Calculations}
\label{sec:numerical}


We want to calculate the constants obtained in the previous sections in a
reliable way. The current section is devoted to this task. Our main tool will
be interval arithmetic, which is performed by the computer algebra system
Sage~\cite{Stein-others:2014:sage-mathem-6.3}.


For the calculations, we need bounds for the tails of our infinite sums. We
start with the following two remarks, which improves the bound found in
Section~\ref{sec:bounds}.


\begin{remark}\label{rem:bound-det-coeff}
  The bounds of Lemma~\ref{lem:bound-det-coeff} for the
  determinant~\eqref{eq:detexpression} can be tightened: 
for an explicit $n$, we can calculate $\f{g}{n}$ more precisely by using the number of
  partitions of $n$ into distinct parts (and not a bound for that number) and
  similarly by using the factorial directly instead of Stirling's formula.

  An even better, but less explicit bound for the $n$th coefficient of
  $\det(I-\vc{M}(x))$ is given by
  \begin{equation}\label{eq:bound-det-coeff-specific-n}
    \abs{t_n} \leq \sum_{h \geq 0} h! \sum_{\substack{1\leq i_1<i_2<\dots<i_h \\ i_1,\dots,i_h\in\N \\ i_1+i_2+\cdots+i_h = n}}
\exp \Big( - (b-1)n \Big( \log 
        \frac{(b-1)n}{h} - 1 \Big) \Big).
  \end{equation}
  Note that we do not know whether this bound is decreasing in $n$ or
  not. However, for a specific $n$, one can calculate this bound, and it is
  much smaller than the general bounds above. For example, for $b=2$, we have
  $\abs{t_{60}} \leq 5.96\cdot 10^{-14}$ with this method, whereas
  Lemma~\ref{lem:bound-det-coeff} would give the bound $0.00014$.
\end{remark}


\begin{remark}\label{rem:bound-det-tail}
  We can also get tighter bounds in Lemma~\ref{lem:bound-det-tail}  using the
  ideas presented in Remark~\ref{rem:bound-det-coeff}. We can even use
  combinations of those bounds: For $M>N$, we separate
  \begin{equation*}
    \abs[Big]{\sum_{n \geq N} t_n x^n} 
    \leq \sum_{M > n \geq N} \abs{t_n} \abs{x}^n
    + \abs[Big]{\sum_{n \geq M} t_n x^n} 
  \end{equation*}
  and use the bound~\eqref{eq:bound-det-coeff-specific-n} for $M > n \geq N$
  and Lemma~\ref{lem:bound-det-tail} (tightened by some ideas from
  Remark~\ref{rem:bound-det-coeff}) for the sum over $n \geq M$. For example,
  again for $b=2$, we obtain the tail-bound
  \begin{equation*}
    \abs[Big]{\sum_{n \geq 60} t_n x^n} \leq 8.051\cdot 10^{-14} 
    + 4.068\cdot 10^{-15}
  \end{equation*}
  for $\abs{x} \leq 1$, where $M=86$ was chosen. (We will denote the constant
  on the right hand side of the inequality above by $B_{T_{60}}$, see the proof
  of Lemma~\ref{lem:zeros-det-b2}.) Using Lemma~\ref{lem:bound-det-tail}
  directly would just give $0.00103$.
\end{remark}


To get numerical values for the constants, we have to work with
\begin{equation*}
  Q(x) = 1 + \frac{x \f{S}{x}}{2\f{T}{x}},
\end{equation*}
where the first few terms of these power series are given by
\begin{equation*}
  \f{S}{x} = \vc{1}^T \operatorname{adj}(I-\vc{M}(x)) \vc{e}_1  = \det(M^*(x))
  = 1 - \tfrac{5}{12} x^2 - \tfrac{1}{6} x^3 
  - \tfrac{1}{24} x^4 + \tfrac{1}{45} x^5 + \cdots
\end{equation*}
and
\begin{equation*}
  \f{T}{x} = \det(I-\vc{M}(x))
  = 1 - x  - \tfrac{1}{2} x^2 + \tfrac{1}{6} x^3
  + \tfrac{1}{8} x^4 + \tfrac{3}{40} x^5 + \cdots,
\end{equation*}
cf.\@ Sections~\ref{sec:rec} and~\ref{sec:asymptotics}. We
obtain the following result for the denominator $\f{T}{x}$.


\begin{lemma}\label{lem:zeros-det-b2}
  For $b=2$, the function $\f{T}{x}$ has exactly one zero with $\abs{x} <
  \frac{3}{2}$. This simple zero lies at $x_0 = 0.83845184342\dots$.
\end{lemma}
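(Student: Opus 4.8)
The plan is to prove Lemma~\ref{lem:zeros-det-b2} by a rigorous application of the argument principle, combined with the effective tail bounds established in Section~\ref{sec:bounds} and sharpened in Remarks~\ref{rem:bound-det-coeff} and~\ref{rem:bound-det-tail}. Since $\f{T}{x}$ is entire (Lemma~\ref{lem:bound-det-coeff}) and has no poles, the number of zeros of $\f{T}{x}$ inside the disc $\abs{x}<\tfrac32$ equals
\begin{equation*}
  \frac{1}{2\pi i} \oint_{\abs{x}=3/2} \frac{\f{T'}{x}}{\f{T}{x}}\,dx,
\end{equation*}
and the goal is to show this integral evaluates to exactly $1$. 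Rather than evaluate the contour integral numerically (which would require controlling $\f{T}{x}$ on the whole circle), I would instead argue via Rouch\'e's theorem: write $\f{T}{x} = \f{T_N}{x} + \f{R_N}{x}$, where $\f{T_N}{x} = \sum_{n=0}^{N-1} t_n x^n$ is an explicit polynomial truncation and $\f{R_N}{x} = \sum_{n\geq N} t_n x^n$ is the tail. By Lemma~\ref{lem:bound-det-tail} (and its sharpenings, e.g.\ with $N=60$ giving the bound $B_{T_{60}}$), on the circle $\abs{x}=\tfrac32$ we have an explicit constant upper bound for $\abs{\f{R_N}{x}}$. If one can verify — rigorously, using interval arithmetic in Sage — that $\abs{\f{T_N}{x}} > \abs{\f{R_N}{x}}$ for all $x$ with $\abs{x}=\tfrac32$, then $\f{T}{x}$ and $\f{T_N}{x}$ have the same number of zeros inside the disc, and it remains to count the zeros of the explicit polynomial $\f{T_N}{x}$.

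For the polynomial $\f{T_N}{x}$, counting zeros inside $\abs{x}<\tfrac32$ is a finite computation that can again be done reliably: one may use Sturm-sequence / resultant techniques or, more simply, another application of the argument principle to the polynomial, evaluated with interval arithmetic on the boundary circle. To locate the single zero and certify it lies at $x_0 = 0.83845184342\ldots$, I would apply Newton's method (or interval Newton) to $\f{T}{x}$, using the truncated series plus rigorous tail bounds to enclose $\f{T}{x}$ and $\f{T'}{x}$ in small intervals around the candidate value, thereby obtaining a certified enclosure. Simplicity of the zero follows from checking $\f{T'}{x_0} \neq 0$ within the interval enclosure; alternatively, since we will already know from Lemma~\ref{lem:poles} that $\rho_2 = 1/\gamma$ is a simple pole of $\f{Q}{x} = 1 + \tfrac{x}{2}\f{S}{x}/\f{T}{x}$, the zero of $\f{T}{x}$ at that point is forced to be simple (provided $\f{S}{\rho_2} \neq 0$, which the interval computation also confirms), and uniqueness of the zero in the disc identifies $x_0 = \rho_2$.

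The verification that $\abs{\f{T_N}{x}} > \abs{\f{R_N}{x}}$ on the entire circle $\abs{x}=\tfrac32$ is the main obstacle. A pointwise check at finitely many sample points is not a proof; the rigorous route is to cover the circle by finitely many arcs and bound $\abs{\f{T_N}{x}}$ from below on each arc using interval arithmetic (Sage evaluates the polynomial on an interval enclosing each arc, with rigorous rounding). One must choose $N$ large enough that the tail bound $\sup_{\abs{x}=3/2}\abs{\f{R_N}{x}}$ is comfortably small, and then choose the arc subdivision fine enough that the interval lower bound on $\abs{\f{T_N}{x}}$ stays above it. Both parameters ($N$ and the number of arcs) are within easy computational reach because the coefficients $t_n$ decay superexponentially, so $\abs{\f{R_N}{x}}$ becomes negligible quickly, while $\f{T}{x}$ itself is of moderate size on $\abs{x}=\tfrac32$ and is bounded away from $0$ there (its only nearby zero being the interior one at $x_0 < \tfrac32$). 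A secondary technical point is propagating the tail bounds of Lemma~\ref{lem:bound-det-tail} through the interval-arithmetic implementation so that all numerical claims — the Rouch\'e inequality, the zero count for $\f{T_N}{x}$, and the enclosure of $x_0$ — carry guaranteed error bars; this is exactly the machinery set up in Section~\ref{sec:numerical}.
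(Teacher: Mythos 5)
Your proposal matches the paper's proof in essence: truncate $\f{T}{x}$ at $N=60$, bound the tail on $\abs{x}=\tfrac32$ by $B_{T_{60}}$ via Lemma~\ref{lem:bound-det-tail} and Remark~\ref{rem:bound-det-tail}, apply Rouch\'e's theorem after certifying a lower bound for $\abs{\f{T_{60}}{x}}$ on the circle with interval arithmetic in Sage, and then locate and certify the unique (simple) zero by an interval-arithmetic bisection/enclosure argument. The only differences are minor implementation choices (interval Newton versus the paper's bisection on $\f{T_{60}}{x}+B_{T_{60}}[-1,1]$), so this is essentially the same argument.
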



\begin{remark}
  Note that $1/x_0 = \gamma = 1.192674341213\dots$, which is the constant found
  in Molteni~\cite{Molteni:2012:repr-2-powers-asy}.
\end{remark}


\begin{proof}[Proof of Lemma~\ref{lem:zeros-det-b2}]
  Denote the polynomials consisting of the first $N$ terms of $\f{T}{x}$ by
  $\f{T_N}{x}$. We have $\abs{\f{T}{x} - \f{T_{60}}{x}} \leq B_{T_{60}}$ with
  $B_{T_{60}} = 1.17\cdot 10^{-13}$, see Lemma~\ref{lem:bound-det-tail} and
  Remark~\ref{rem:bound-det-tail}. On the other hand, we have
  $\abs{\f{T_{60}}{x}} > 0.062$ for $\abs{x} = \frac32$ (the minimum is
  attained on the positive real axis) by using a bisection method together with
  interval arithmetic (in
  Sage~\cite{Stein-others:2014:sage-mathem-6.3}). Therefore, the functions
  $\f{T}{x}$ and $\f{T_{60}}{x}$ have the same number of zeros inside a disk
  $\abs{x}<\frac32$ by Rouch\'e's theorem ($0.062>B_{T_{60}}$). This number
  equals one, since there is only one zero, a simple zero, of $\f{T_{60}}{x}$
  with absolute value smaller than $\frac32$.

  To find the exact position of that zero consider $\f{T_{60}}{x} + B_{T_{60}}I$
  with the interval $I = [-1,1]$. Again, using a bisection method (starting with
  $\frac32 I$) plus interval arithmetic, we find an interval that contains
  $x_0$. From this, we can extract correct digits of $x_0$.
\end{proof}


From the previous result, we can calculate all the constants. The values of
those for the first few $b$ can be found in Table~\ref{tab:values-thm-asy}. The
following remark gives some details.


\begin{remark}
  As mentioned, to obtain reliable numerical values of all the constants
  involved in the statement of our theorems, we use the bounds obtained in
  Section~\ref{sec:bounds} together with interval arithmetic.

  Let $b=2$ and denote, as above, the polynomials consisting of the first $N$
  terms of $\f{S}{x}$ and $\f{T}{x}$, by $\f{S_N}{x}$ and $\f{T_N}{x}$
  respectively.  By the methods of Lemmas~\ref{lem:bound-det-coeff}
  and~\ref{lem:bound-det-tail} and Remarks~\ref{rem:bound-det-coeff}
  and~\ref{rem:bound-det-tail} we get, for instance, that $\abs{\f{T'}{x} -
    \f{T'_{60}}{x}} \leq B_{T'_{60}}$ with $B_{T'_{60}} = 8.397\cdot
  10^{-12}$. We also have $\abs{\f{S}{x} - \f{S_{60}}{x}} \leq B_{S_{60}}$ with
  $B_{S_{60}} = 1.848\cdot 10^{-13}$ for the function in the numerator of
  $\f{Q}{x}$. We plug $x_0$ into the approximations $S_{60}$ and $T'_{60}$ and
  use these bounds to obtain precise values (with guaranteed error estimates)
  for all the constants that occur in our formula.
\end{remark}


We finish this section with the following remark.


\begin{remark}
  If one does not insist on such explicit error bounds for the numerical
  approximations as above, one can get ``more precise'' numerical results (without formal proofs that all the digits are actually correct).
  Here, specifically, the first three terms in the asymptotic expansion are as
  follows: {\small
    \begin{align*}
      \f{\W_2}{1,n} / n! &= 0.296372049053529075588648642133 \cdot 
      1.192674341213466032221288982529^{n-1} \\
      &\mathrel{\phantom{=}} +\,0.119736335383631653495068554245 \cdot
      0.643427418149500070120570318509^{n-1} \\
      &\mathrel{\phantom{=}} +\,0.0174783635210388007051384381833 \cdot
      (-0.5183977738993377728627273570710)^{n-1} \\
      &\mathrel{\phantom{=}} +\,\cdots
    \end{align*}}
However, the numerical approximations lack the ``certifiability'' of
  e.g.\@ those in Table~\ref{tab:values-thm-asy}.
\end{remark}


\section{Maximum Number of Representations}
\label{sec:thm-max}


Let $\f{\U_b}{\ell,n}$ and $\f{\W_b}{s,n}$ be as defined in \eqref{eq:def-Ub}
in the introduction. In this section we analyze the function~$\f{M}{n} =
\f{M_b}{n}$, which equals the maximum of $\f{\U_b}{\ell,n}$ over all $\ell$,
i.e., we have
\begin{equation*}
  \f{M}{n} = \max_{\ell \geq 1}\, \f{\U_b}{\ell,n} 
  = \max_{s \geq 1}\, \f{\W_b}{s,n}.
\end{equation*}
This gives the maximum number of representations any positive integer can have
as the sum of exactly $n$ powers of $b$.


Throughout this section, we use the generating function
\begin{equation*}
  \f{W}{x} = \sum_{n=1}^{\infty} \frac{\f{\W_b}{1,n}}{n!} x^n.
\end{equation*}
Further, denote by $\theta$ the unique positive real solution (the power series
$W$ has real, nonnegative coefficients) with $\f{W}{\theta}=1$, and we set
$\nu=1/\theta$. We prove the following theorem, which is a generalized version
of Theorem~\ref{thm:maximum}.


\begin{thm}\label{thm:maximum-full}
  With the notions of $\f{W}{x}$, $\theta$ and $\nu$ as above, we have
  \begin{equation}\label{eq:M-upper-bound}
    \f{M}{n}/n! \leq \nu^n
  \end{equation}
  for all $n \geq 1$, and the constant is optimal: We have the more precise
  asymptotic formula
  \begin{equation*}
    \f{M}{n}/n! = \lambda n^{-1/2} \nu^n \big( 1 + \Oh[big]{n^{-1/2}} \big)
  \end{equation*}
  with $\lambda = (b-1) \left( \theta \f{W'}{\theta} \sigma \sqrt{2\pi}
  \right)^{-1}$,
  where $\sigma>0$ is defined by
  \begin{equation*}
    \sigma^2
    = \frac{\f{W''}{\theta}}{\theta \f{W'}{\theta}^3} 
    + \frac{1}{\theta^2 \f{W'}{\theta}^2} - \frac{1}{\theta \f{W'}{\theta}}.
  \end{equation*}
  Moreover, the maximum $\f{M}{n} = \max_{s\geq1}\, \f{\W_b}{s,n}$ is attained
  at $s = \mu n + \Oh{1}$
  with the constant $\mu = \left( \theta \f{W'}{\theta} \right)^{-1}$.
\end{thm}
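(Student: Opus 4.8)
The plan is to extract $\f{M}{n}=\max_{s\ge1}\f{\W_b}{s,n}$ from the bivariate generating function built from $\f{W}{x}$. Since $\sum_{n,s}\frac{\f{\W_b}{s,n}}{n!}x^ny^s=\sum_{s\ge1}y^s\f{W}{x}^s=\frac{y\f{W}{x}}{1-y\f{W}{x}}$, the quantity $\f{M}{n}/n!$ is the maximum over $s$ of $[x^n]\f{W}{x}^s$. The upper bound~\eqref{eq:M-upper-bound} is immediate and robust: for any $s$, $[x^n]\f{W}{x}^s\le\theta^{-n}\f{W}{\theta}^s=\nu^n$ because $\f{W}$ has nonnegative coefficients and $\f{W}{\theta}=1$, so $\f{M}{n}/n!\le\nu^n$ for every $n$. (One should note $\theta$ lies strictly inside the disc of convergence of $\f{W}$, which follows since $\f{W}$ is meromorphic by Lemma~\ref{lem:poles} with dominant pole $\rho>\theta$ — indeed $\f{W}{\rho^-}=\infty>1$.)

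For the sharp asymptotics, I would apply a saddle-point / large-deviations analysis to $[x^n]\f{W}{x}^s$ jointly in $n$ and $s$. Writing $[x^n]\f{W}{x}^s=\frac{1}{2\pi i}\oint\frac{\f{W}{x}^s}{x^{n+1}}\,dx$ and optimizing over the radius $r$, the natural saddle satisfies $r\f{W'}{r}/\f{W}{r}=n/s$. The maximum over $s$ of the resulting estimate is governed by the exponential rate $s\log\f{W}{r}-n\log r$; differentiating in $s$ (which also moves $r$) and using the saddle equation, the stationary condition collapses to $\f{W}{r}=1$, i.e.\ $r=\theta$. This pins the optimal $s$ to $s=\mu n+\Oh{1}$ with $\mu=n/s$ read off from the saddle equation at $r=\theta$, namely $\mu=\big(\theta\f{W'}{\theta}\big)^{-1}$, giving the last assertion of the theorem. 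For the precise constant, with $s=\mu n$ the saddle is exactly at $x=\theta$, and a standard Gaussian approximation to the contour integral at a non-critical (interior) saddle yields $[x^n]\f{W}{x}^s\sim\frac{\nu^n}{\tilde\sigma\sqrt{2\pi s}}$ where $\tilde\sigma^2=\frac{d}{dt}\big(t\tfrac{d}{dt}\log\f{W}{e^t}\big)$ evaluated at the saddle $t=\log\theta$; substituting $s=\mu n$ and simplifying reproduces $\lambda n^{-1/2}\nu^n$ with the stated $\sigma$. I would also check that rounding $\mu n$ to the nearest integer $s$, and letting $s$ vary by $\Oh{\sqrt n}$ around $\mu n$, only perturbs the leading term by $1+\Oh{n^{-1/2}}$, since the dependence of $[x^n]\f{W}{x}^s$ on $s$ near the optimum is itself Gaussian with width $\sqrt n$; summing or maximizing over this window gives the $\Oh{n^{-1/2}}$ error term.

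The main obstacle is making the two-parameter saddle-point estimate rigorous and \emph{uniform} in $s$ over the relevant range $s=\mu n+\Oh{\sqrt n}$, rather than just for a single fixed ratio. Concretely one must: (i) justify moving the contour to $|x|=\theta$ and control the tails away from the saddle — here analyticity of $\f{W}$ in a disc of radius $\rho>\theta$ and the strict convexity of $t\mapsto\log\f{W}{e^t}$ (equivalently $\sigma^2>0$, which must be verified from the series) give the needed decay; (ii) handle the fact that $\theta$ is an interior point of analyticity, not a singularity, so this is a genuine saddle-point rather than a singularity analysis — the estimate $[x^n]\f{W}{x}^s=\frac{\f{W}{\theta}^s}{\theta^n}\cdot\frac{1+\Oh{s^{-1/2}}}{\tilde\sigma\sqrt{2\pi s}}$ needs the standard lemma on Fourier-Laplace integrals with a nondegenerate quadratic critical point, applied with the large parameter $s$; (iii) translate the error $\Oh{s^{-1/2}}$ into $\Oh{n^{-1/2}}$ using $s\asymp n$, and absorb the effect of the integer constraint on $s$. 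A secondary technical point is verifying $\sigma^2>0$ (strict log-convexity); this follows because $\f{W}$ is not a monomial — e.g.\ its first two nonzero coefficients already force $\f{W''}{\theta}>0$ together with the displayed identity — but it should be stated explicitly since $\lambda$ and the Gaussian approximation both require it.
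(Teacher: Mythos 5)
Your proof of the upper bound \eqref{eq:M-upper-bound} is exactly the paper's argument (Lemma~\ref{lem:M-upper-bound}): evaluate $\sum_n \W_b(s,n)\theta^n/n! = W(\theta)^s=1$ and use nonnegativity, then maximize over $s$. For the asymptotic formula you take a genuinely different route: the paper works with the bivariate function $G(x,u)=1/(1-uW(x))$, moves the dominant pole $x=\theta(u)$ by singularity perturbation, and applies a local quasi-power (local limit) theorem in $s$, whereas you propose a direct two-parameter saddle-point (``large powers'') analysis of $[x^n]W(x)^s$. That route is viable in principle, and your identification of the optimal ratio (stationarity forcing $W(r)=1$, hence $r=\theta$ and $\mu=(\theta W'(\theta))^{-1}$) together with the variance duality $\sigma^2=\mu^3\tilde\sigma^2$ is consistent with the stated constants; note though that your displayed formula for $\tilde\sigma^2$ has a stray factor $t$ — it should be $\tfrac{d^2}{dt^2}\log W(e^t)$ at $t=\log\theta$.

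There is, however, a genuine gap: you ignore the periodicity of $W$. Since $\W_b(1,n)\neq 0$ only for $n\equiv 1\pmod{b-1}$, one has $W(x)=x\,V(x^{b-1})$, so $W(\omega x)=\omega W(x)$ for every $(b-1)$th root of unity $\omega$, and on the circle $|x|=\theta$ the modulus $|W|$ attains its maximum $W(\theta)=1$ at all $b-1$ points $\omega\theta$. Your justification for discarding the contour away from $x=\theta$ (``strict convexity of $t\mapsto \log W(e^t)$'') only controls the radial direction and is simply false in the angular direction for $b\geq 3$: there are $b-1$ equally dominant saddles, whose contributions carry the phases $\omega^{s-n}$ and sum to the factor $b-1$ exactly when $s\equiv n\pmod{b-1}$ (the only case with a nonzero coefficient). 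A single-saddle evaluation as sketched would produce $\lambda$ \emph{without} the factor $(b-1)$, contradicting the theorem for $b\geq 3$ (for $b=2$ the issue is vacuous). The fix is standard — sum over the $b-1$ conjugate saddles, or substitute $W(x)=xV(x^{b-1})$ and run the argument for the aperiodic series $V$ — but it must be done, and in addition the strict bound $|W(\theta e^{i\varphi})|<1$ away from those $b-1$ symmetric points requires the aperiodicity of $V$ (this is precisely the content of the paper's Lemma~\ref{lem:alpha-minimal}, which is why it restricts to $|\Arg u|\leq \pi/(b-1)$, and why the paper writes $g_n(u)=u^r h_n(u^{b-1})$ before applying the local limit theorem); real-axis log-convexity does not supply it. With these repairs, plus the uniformity in $s$ near $\mu n$ that you already flag, your approach would go through.
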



In Table~\ref{tab:values-thm-max}, we are listing numerical values for the
constants of Theorem~\ref{thm:maximum-full}. These values are simply calculated by
using a finite approximation to $\f{W}{x}$, namely $\f{W_N}{x} = \sum_{n=1}^{N}
\frac{\f{\W_b}{1,n}}{n!} x^n$ for some precision~$N$.


\begin{table}
  \centering
  \begin{equation*}
    \begin{array}{cccccc}
      b & \lambda & \theta & \nu=1/\theta  & \mu & \sigma^2 \\
      \hline
      2 & 0.27693430 & 0.57071698 & 1.75218196 & 0.44867215 & 0.41775807 \\
3 & 0.70656285 & 0.84340237 & 1.18567368 & 0.66924459 & 0.57114748 \\
4 & 1.70314663 & 0.95872521 & 1.04305174 & 0.87318716 & 0.37650717 \\
5 & 4.20099030 & 0.99167231 & 1.00839763 & 0.96645454 & 0.13477198 \\
6 & 10.61691472 & 0.99861115 & 1.00139078 & 0.99304650 & 0.03480989 \\
7 & 28.28286119 & 0.99980159 & 1.00019845 & 0.99880929 & 0.00714564 \\
8 & 80.09108610 & 0.99997520 & 1.00002480 & 0.99982638 & 0.00121534 \\

      \hline
    \end{array}
  \end{equation*}
  \caption{Values (numerical approximations) for the constants of 
    Theorem~\ref{thm:maximum-full}. In the calculations the approximation
    $\f{W_{60}}{x}$ was used.}
  \label{tab:values-thm-max}
\end{table}


We start with the upper bound~\eqref{eq:M-upper-bound} of
Theorem~\ref{thm:maximum-full}, which is done in the following lemma.


\begin{lemma}\label{lem:M-upper-bound}
 We have
 \begin{equation*}
   \f{M}{n}/n! \leq \nu^n
 \end{equation*}
 for all $n \geq 1$.
\end{lemma}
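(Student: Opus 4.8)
The plan is to derive the bound $\f{M}{n}/n! \leq \nu^n$ directly from the defining property of $\theta$, namely $\f{W}{\theta} = 1$, by a coefficient-extraction argument. Recall that $\f{M}{n} = \max_{s \geq 1} \f{\W_b}{s,n}$, so $\f{M}{n}/n! \leq \sum_{s \geq 1} \f{\W_b}{s,n}/n!$. Using the relation
\begin{equation*}
  \sum_{n \geq 1} \frac{\f{\W_b}{s,n}}{n!} x^n = \f{W}{x}^s
\end{equation*}
from Equation~\eqref{eq:ws_equation}, summing over $s \geq 1$ gives
\begin{equation*}
  \sum_{s \geq 1} \sum_{n \geq 1} \frac{\f{\W_b}{s,n}}{n!} x^n = \sum_{s \geq 1} \f{W}{x}^s = \frac{\f{W}{x}}{1 - \f{W}{x}},
\end{equation*}
valid as a formal power series identity (and as an analytic identity for $0 \leq x < \theta$, since $\f{W}{x} < 1$ there). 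Hence $\f{M}{n}/n!$ is at most the $n$th coefficient of $\frac{\f{W}{x}}{1-\f{W}{x}}$, which has nonnegative coefficients.

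\textbf{Key steps.} First I would fix any $x$ with $0 < x < \theta$. Since all involved power series have nonnegative coefficients, the $n$th coefficient of $\frac{\f{W}{x}}{1-\f{W}{x}}$ is bounded by $\frac{\f{W}{x}}{1-\f{W}{x}} \cdot x^{-n}$ (bounding a single term of a series of nonnegative terms by the whole sum, divided by $x^n$). Therefore
\begin{equation*}
  \frac{\f{M}{n}}{n!} \leq \frac{\f{W}{x}}{1-\f{W}{x}} \cdot \frac{1}{x^n}
\end{equation*}
for every $x \in (0,\theta)$. Now let $x \to \theta^-$: the factor $\frac{\f{W}{x}}{1-\f{W}{x}}$ blows up, so this naive limit is not immediately good enough — I need to be more careful and keep the $\frac{\f{W}{x}}{1-\f{W}{x}}$ factor under control. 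The fix is to bound the coefficient of $x^n$ in $\frac{\f{W}{x}}{1-\f{W}{x}}$ more tightly: write $\frac{\f{W}{x}}{1-\f{W}{x}} = \sum_{s\geq1}\f{W}{x}^s$ and extract the coefficient of $x^n$ from each $\f{W}{x}^s$ separately, bounding $[x^n]\f{W}{x}^s \leq \f{W}{x}^s x^{-n}$ — but note $\f{W}{x}^s \to 1$ as $x \to \theta^-$ \emph{for each fixed $s$}, uniformly boundedly by $1$ since $\f{W}{x} \leq 1$. Summing only over the $s$ that can contribute (since $\f{\W_b}{s,n} = 0$ unless $s \leq n$ and $n \equiv s \bmod b-1$, there are at most $n$ relevant values), and using $\f{W}{x}^s \leq 1$ for $x \leq \theta$, gives $[x^n]\frac{\f{W}{x}}{1-\f{W}{x}} = \sum_s [x^n]\f{W}{x}^s$. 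Actually the cleanest route: for each fixed $s$, $[x^n]\f{W}{x}^s \leq \theta^{-n}\f{W}{\theta}^s = \theta^{-n}$, and since $\f{M}{n} = n!\max_s \f{\W_b}{s,n} = \max_s [x^n]\f{W}{x}^s \cdot n!$... wait, more precisely $\f{M}{n}/n! = \max_s [x^n]\f{W}{x}^s \leq \max_s \theta^{-n}\f{W}{\theta}^s = \theta^{-n} = \nu^n$, using that $\f{W}{x}^s$ has nonnegative coefficients so $[x^n]\f{W}{x}^s \leq \f{W}{\theta}^s\theta^{-n}$ evaluated at $x = \theta$.

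\textbf{Main obstacle.} The only subtlety — and the step I expect to require the most care — is justifying the coefficient bound $[x^n]\f{W}{x}^s \leq \f{W}{\theta}^s \theta^{-n} = \theta^{-n}$ at the boundary point $x = \theta$ itself, rather than at some $x < \theta$. This requires knowing that $\f{W}{x}$ is analytic (or at least that its power series converges) at $x = \theta$, i.e., that $\theta$ does not exceed the radius of convergence of $W$. This follows because $W(x) = Q(x^{b-1})\cdot(\text{something})$... more directly, $\f{W}{x} = x \cdot \f{Q}{x^{b-1}}^{1/(b-1)}$-type relation aside, the radius of convergence of $W$ equals $\rho_b^{1/(b-1)}$ (from Lemma~\ref{lem:poles}, $Q$'s radius of convergence is $\rho_b$), and since $\f{W}{x} \to \infty$ as $x$ approaches that radius while $\f{W}{\theta} = 1$, we get $\theta <$ radius of convergence, so $\f{W}{\theta}$ and all $[x^n]\f{W}{x}^s$ are finite and the coefficientwise inequality $[x^n]\f{W}{x}^s \leq \f{W}{\theta}^s\theta^{-n}$ holds by nonnegativity of coefficients. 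Once this is in place, taking the maximum over $s$ and using $\f{W}{\theta} = 1$ immediately yields $\f{M}{n}/n! \leq \nu^n$ for all $n \geq 1$.
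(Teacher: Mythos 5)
Your final argument — $\f{\W_b}{s,n}/n! = [x^n]\f{W}{x}^s \leq \f{W}{\theta}^s\,\theta^{-n} = \theta^{-n}$ by nonnegativity of coefficients, then take the maximum over $s$ — is exactly the paper's proof, so the proposal is correct and essentially identical (the initial detour through $\f{W}{x}/(1-\f{W}{x})$ is discarded by you anyway). The ``main obstacle'' you worry about is not really one: convergence of the series for $W$ at $\theta$ is already implicit in the definition of $\theta$ as a positive solution of $\f{W}{\theta}=1$ for a series with nonnegative coefficients, so no appeal to the radius of convergence or to Lemma~\ref{lem:poles} is needed.
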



\begin{proof}
  Recall that Equation~\eqref{eq:ws_equation} gives us
  \begin{equation*}
    \sum_{n=1}^{\infty} \frac{\f{\W_b}{s,n}}{n!} x^n 
    = \bigg( \sum_{n=1}^{\infty} \frac{\f{\W_b}{1,n}}{n!} x^n \bigg)^s 
    = \f{W}{x}^s.
  \end{equation*}
  Since $\theta > 0$ was chosen such that $\f{W}{\theta} = 1$, it clearly
  follows that
  \begin{equation*}
    \sum_{n=1}^{\infty} \frac{\f{\W_b}{s,n}}{n!} \theta^n = 1,
  \end{equation*}
  hence $\f{\W_b}{s,n}/n! \leq \theta^{-n}$ for all $s$ and $n$, and taking the
  maximum over all $s \geq 1$ yields
  \begin{equation*}
    \f{M}{n}/n! = \max_{s \geq 1}\, \f{\W_b}{s,n}/n! \leq \theta^{-n} = \nu^n,
  \end{equation*}
  which is what we wanted to show.
\end{proof}


It remains to prove the asymptotic formula for $\f{M}{n}$. We first
gather some properties of the solution $x=\f{\theta}{u}$ of the functional
equation $\f{W}{x}=1/u$.


\begin{lemma}\label{lem:alpha-minimal}
  For $u\in\C$ with $\abs{u}\leq1$ and $\abs{\Arg u} \leq \frac{\pi}{b-1}$, each
  root $x$ of $\f{W}{x} = 1/u$ satisfies the inequality $\abs{x} \geq \theta$,
  where equality holds only if $x = \theta$ and $u =1$.
\end{lemma}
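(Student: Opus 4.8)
\textbf{Proof proposal for Lemma~\ref{lem:alpha-minimal}.}

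The plan is to exploit positivity of the coefficients of $W$ together with the arithmetic restriction that only exponents $n$ with $n \equiv 1 \pmod{b-1}$ contribute. First I would recall from Section~\ref{sec:rec} that $\f{\W_b}{1,n}/n!$ vanishes unless $n = (b-1)m+1$, so that we may write $\f{W}{x} = x\,\f{Q}{x^{b-1}}$, where $\f{Q}{y} = \sum_{m\geq0} \frac{q_b(m)}{((b-1)m+1)!} y^m$ has real \emph{nonnegative} coefficients (and $q_b(0)=1$). The condition $\abs{u}\leq1$ and $\abs{\Arg u}\leq\frac{\pi}{b-1}$ is exactly what is needed so that $v := 1/u$ can be matched with a value of $W$ along a ray: writing $x = r e^{i\varphi}$ with $\abs{\varphi}\leq\frac{\pi}{b-1}$, the point $x^{b-1}$ has argument $(b-1)\varphi \in [-\pi,\pi]$, so we stay in a range where the triangle inequality for the power series $Q$ can be applied cleanly.

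The key step is the chain of inequalities: if $\f{W}{x} = 1/u$, then
\begin{equation*}
  1 = \abs{u}^{-1} \cdot \abs{u} \leq \abs{u} = \abs{\f{W}{x}}
  = \abs{x}\cdot\abs{\f{Q}{x^{b-1}}} \leq \abs{x}\cdot \f{Q}{\abs{x}^{b-1}} = \f{W}{\abs{x}},
\end{equation*}
using $\abs{u}\leq1$ in the first inequality and the triangle inequality together with nonnegativity of the coefficients of $Q$ in the second. Since $W$ is strictly increasing on $[0,\infty)$ (its coefficients are nonnegative and not all zero beyond the constant term, indeed $\f{W'}{x}>0$ for $x>0$) and $\f{W}{\theta}=1$ by definition of $\theta$, the inequality $\f{W}{\abs{x}}\geq 1 = \f{W}{\theta}$ forces $\abs{x}\geq\theta$. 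This gives the desired bound.

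For the equality case, suppose $\abs{x}=\theta$. Then every inequality above is an equality. From $\f{W}{\abs{x}} = 1$ and strict monotonicity we get nothing new, but equality in the triangle inequality $\abs{\f{Q}{x^{b-1}}} = \f{Q}{\abs{x}^{b-1}}$, given that $Q$ has at least two terms with strictly positive coefficients (e.g.\ $1$ and the coefficient of $y$, which is $q_b(1)/b! > 0$), forces all contributing terms $(x^{b-1})^m$ to be nonnegative reals, hence $x^{b-1} \geq 0$, i.e.\ $x^{b-1} = \theta^{b-1}$, i.e.\ $x = \theta$ (the only root of $x^{b-1}=\theta^{b-1}$ in the sector $\abs{\Arg x}\leq\frac{\pi}{b-1}$). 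Finally, equality in $\abs{u}\leq1$ then gives $\abs{u}=1$, and $u = 1/\f{W}{x} = 1/\f{W}{\theta} = 1$. The main obstacle is the bookkeeping around which sector of $x$ corresponds to the hypothesis on $u$ and making sure the triangle-inequality equality analysis for $Q$ is airtight; I expect the argument to reduce, after the substitution $y = x^{b-1}$, to the standard fact that a power series with nonnegative coefficients and $\geq 2$ nonzero terms attains its modulus maximum on a circle only at the positive real point.
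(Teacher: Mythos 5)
Your overall strategy is the same as the paper's: triangle inequality plus nonnegativity of the coefficients gives $1 \leq |1/u| = |W(x)| \leq W(|x|)$, monotonicity of $W$ on the positive axis gives $|x| \geq \theta$, and the equality case is analyzed through the factorization $W(x) = x\,Q(x^{b-1})$ (the paper writes $W(x)=xV(x^{b-1})$ and invokes aperiodicity of $V$), forcing $x^{b-1} = \theta^{b-1}$. Two remarks on the first part: your displayed chain is garbled — since $W(x)=1/u$ and $|u|\leq 1$, the correct statement is $1 \leq |u|^{-1} = |W(x)| \leq |x|\,Q(|x|^{b-1}) = W(|x|)$, whereas you wrote $1 \leq |u| = |W(x)|$, which is false as written (the intent is clear, so this is a slip rather than a conceptual error).

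The genuine gap is in the equality case. You conclude $x=\theta$ from $x^{b-1}=\theta^{b-1}$ by declaring $x$ to be "the only root of $x^{b-1}=\theta^{b-1}$ in the sector $|\Arg x| \leq \frac{\pi}{b-1}$", and in your opening paragraph you simply write $x = re^{i\varphi}$ with $|\varphi| \leq \frac{\pi}{b-1}$. But the hypothesis of the lemma restricts $\Arg u$, not $\Arg x$, and nothing in your argument transfers the sector condition from $u$ to $x$. This is precisely the delicate point for $b\geq 3$: the points $x = \theta e^{2\pi i k/(b-1)}$ with $k\neq 0$ satisfy $x^{b-1}=\theta^{b-1}$ and $|x|=\theta$, and they solve $W(x)=1/u$ for a suitable $u$ with $|u|=1$; the lemma's conclusion holds only because that $u$ violates $|\Arg u| \leq \frac{\pi}{b-1}$, which has to be checked. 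The paper closes this by observing that once $x^{b-1}=\theta^{b-1}$, one has $W(x) = x\,Q(\theta^{b-1}) = x\,W(\theta)/\theta = x/\theta$, hence $x = \theta/u$ and $\Arg x = -\Arg u$, so $|\Arg x| \leq \frac{\pi}{b-1}$ does follow from the hypothesis, and only then does $x^{b-1}=\theta^{b-1}$ force $x=\theta$ and in turn $u=1$. Your proof needs this step (or an equivalent argument tying $\Arg x$ to $\Arg u$); without it the equality statement is unproved for every $b \geq 3$ (for $b=2$ the sector condition on $u$ is vacuous and the issue does not arise).
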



\begin{proof}
  Let $u$ be as stated in the lemma. By the nonnegativity of the coefficients
  of $W$ and the triangle inequality, we have
  \begin{equation}\label{eq:alpha-minimal:ineq}
    \f{W}{\theta} = 1 \leq \abs{1/u} 
    = \abs{\f{W}{x}} \leq \f{W}{\abs{x}}.
  \end{equation}
  The first part of the lemma follows, since $W$ is increasing on the positive
  real line. It remains to determine when equality holds, so we assume in the
  following that $\abs{x} = \theta$.

  Since the coefficients $\f{\W_b}{1,n}$ are nonzero only for $n\equiv1 \mod
  b-1$, we can write $\f{W}{x} = x
  \f{V}{x^{b-1}}$. From~\eqref{eq:alpha-minimal:ineq}, we obtain
  \begin{equation*}
    W(\theta) = \theta \f[big]{V}{\theta^{b-1}} 
    = \abs{x} \abs[big]{\f[big]{V}{x^{b-1}}} = \abs{W(x)}.
  \end{equation*}
  Since the coefficients of $V$ are indeed positive, the power series $V$ is
  aperiodic\footnote{A power series is \emph{aperiodic} if the exponents whose
    associated coefficients are not zero are not contained in $a+b\Z$ for any
    $a$, $b$ with $b\geq2$.}. Therefore, the inequality
  $\abs{\f[big]{V}{x^{b-1}}} \leq \f[big]{V}{\abs[big]{x^{b-1}}}$ is strict,
  i.e., we have $\f[big]{V}{x^{b-1}} < \f[big]{V}{\abs[big]{x^{b-1}}}$ (which
  would yield a contradiction to the assumption that $\abs{x}=\theta$) unless
  $x^{b-1}$ is real and positive, which means that $x^{b-1} =
  \theta^{b-1}$. When this is the case, we have
  \begin{equation*}
    \frac{\theta}{u} = \theta \f{W}{x} = \theta x \f[big]{V}{x^{b-1}}
    = x \theta \f[big]{V}{\theta^{b-1}} = x \f{W}{\theta} = x,
  \end{equation*}
  so $\abs{\Arg x} = \abs{-\Arg u} \leq \frac{\pi}{b-1}$. This means that
  $x^{b-1}$ can only be real and positive if $x$ is itself real and positive,
  which implies that $x = \theta$ and $u = 1$.
\end{proof}


The following lemma tells us that the single dominant root of $\f{W}{x}=1$ is
the simple zero~$\theta$.


\begin{lemma}\label{lem:alpha-dominant-root}
  There exists exactly one root of $\f{W}{x}=1$ with $\abs{x}\leq\theta$,
  namely $\theta$. Further, $\theta$ is a simple root, and there exists an $\epsilon>0$ such that $\theta$ is the
  only root of $\f{W}{x}=1$ with absolute value less than
  $\theta+\epsilon$.
\end{lemma}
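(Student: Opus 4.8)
The plan is to reduce the statement about $\f{W}{x}=1$ to the analysis already carried out in Lemma~\ref{lem:alpha-minimal}. First I would invoke Lemma~\ref{lem:alpha-minimal} with $u=1$: since $u=1$ satisfies $\abs{u}\leq1$ and $\Arg u = 0 \leq \frac{\pi}{b-1}$, every root $x$ of $\f{W}{x}=1$ has $\abs{x}\geq\theta$, with equality forcing $x=\theta$. This immediately gives that $\theta$ is the \emph{only} root with $\abs{x}\leq\theta$, settling the first assertion. (The existence of $\theta$ itself was part of its definition in Section~\ref{sec:thm-max}, relying on the nonnegativity of the coefficients of $W$ and the fact that $\f{W}{x}\to\infty$ as $x$ increases along the positive reals up to the radius of convergence, which is finite because $Q(x)$ has a finite-modulus singularity.)

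Next I would establish simplicity. Since $W$ has nonnegative coefficients and is not constant, it is strictly increasing on $(0,\theta]$, so $\f{W'}{\theta}>0$; hence the equation $\f{W}{x}=1$ has a simple root at $\theta$. (Alternatively, if $\theta$ were a multiple root then $\f{W'}{\theta}=0$, contradicting termwise positivity of the derivative series at a positive point.)

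For the last part — no other roots in a slightly larger disc $\abs{x}<\theta+\epsilon$ — I would argue as follows. Suppose, for contradiction, that there is a sequence of roots $x_k$ of $\f{W}{x}=1$ with $\theta<\abs{x_k}$ and $\abs{x_k}\to\theta$. Because $W$ is analytic in a neighbourhood of the closed disc $\abs{x}\leq\theta$ (its radius of convergence $\theta_0$ satisfies $\theta<\theta_0$, since the dominant singularity $\rho_b$ of $Q$ is a pole and $\theta<\rho_b$ — indeed $\f{W}{\rho_b^-}=\infty>1$ forces $\theta<\rho_b$), the zeros of the analytic function $\f{W}{x}-1$ are isolated; on the compact circle $\abs{x}=\theta$ the only zero is $\theta$ itself, so for small enough $\epsilon$ the only zero of $\f{W}{x}-1$ in the closed annulus $\theta\leq\abs{x}\leq\theta+\epsilon$ lying on the inner boundary is $\theta$, and by isolatedness (plus compactness of the closed disc $\abs{x}\leq\theta+\epsilon$ for $\epsilon$ small) there are only finitely many zeros there; shrinking $\epsilon$ further excludes all but $\theta$. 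This is the routine ``finitely many zeros in a compact set'' argument and I would phrase it via a standard compactness/isolated-zeros statement rather than spelling it out.

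The only genuine subtlety — the main obstacle, such as it is — is justifying that $W$ actually extends analytically past $\abs{x}=\theta$, i.e.\ that $\theta$ is strictly smaller than the radius of convergence $\theta_0 = \rho_b$ of $W=Q$. This follows because $Q(x)=1+\frac{x}{b!}\,\f{S}{x}/\f{T}{x}$ has a pole (not a removable singularity) at $\rho_b$ by Lemma~\ref{lem:poles}, so $Q(x)\to\infty$ as $x\to\rho_b^-$ along the reals, whereas $\f{W}{\theta}=1$; since $W$ is increasing on the positive reals, $\theta<\rho_b$. Once this separation is in hand, everything else is a direct appeal to Lemma~\ref{lem:alpha-minimal} and elementary facts about analytic functions with nonnegative Taylor coefficients.
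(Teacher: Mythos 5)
Your proof follows the paper's own route: Lemma~\ref{lem:alpha-minimal} with $u=1$ gives uniqueness on $\abs{x}\leq\theta$, strict monotonicity of $W$ on the positive real axis gives simplicity, and analyticity of $W$ in a disc strictly containing $\theta$ together with the isolation of zeros of holomorphic functions gives the $\epsilon$. One correction: $W$ is not equal to $Q$; rather $\f{W}{x}=x\,\f{Q}{x^{b-1}}$, so the radius of convergence of $W$ is $\rho_b^{1/(b-1)}=\gamma^{-1/(b-1)}$, not $\rho_b$ (the two coincide only for $b=2$). Your argument survives this correction verbatim — the pole of $Q$ at $\rho_b$ still forces $\f{W}{x}\to\infty$ as $x$ approaches $\gamma^{-1/(b-1)}$ along the positive reals, whence $\theta<\gamma^{-1/(b-1)}$ — and this is exactly the separation the paper needs, except that it obtains the lower bound on the radius of convergence from the coefficient asymptotics of Theorem~\ref{thm:asymptotics:general_base} rather than from the pole of $Q$.
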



\begin{proof}
  By Lemma~\ref{lem:alpha-minimal} with $u=1$, the positive real $\theta$ is
  the unique root of $\f{W}{x}=1$ with minimal absolute value. This proves the
  first part of the lemma.

  Using Theorem~\ref{thm:asymptotics:general_base}, we get
  \begin{equation*}
    \abs{\f{W}{x}} = \f{O}{\sum_{m=0}^\infty \gamma^m \abs{x}^{(b-1)m}},
  \end{equation*}
  which is bounded for $\abs{x}<1/\gamma^{1/(b-1)}$. Therefore, the radius of
  convergence $r$ of $W$ is at least $1/\gamma^{1/(b-1)}>\theta$, and so $W$ is
holomorphic inside a circle that contains $\theta$. Since zeros of holomorphic
  functions do not accumulate, the existence of a suitable $\epsilon>0$ as desired
  follows.

  The root $\theta$ is simple, since $W(x)$ is strictly increasing on $(0,r)$.
\end{proof}

%
%
%


We are now ready to prove the asymptotic formula for $\f{M}{n}$. To this end,
we consider the bivariate generating function
\begin{equation*}
  \f{G}{x,u} =
  1 + \sum_{n=1}^{\infty} \sum_{s=1}^{\infty} \frac{\f{\W_b}{s,n}}{n!} x^nu^s 
  = \sum_{s=0}^{\infty} \f{W}{x}^su^s 
  = \frac{1}{1-u\f{W}{x}}.
\end{equation*}
In order to get $\max_{s\geq1}\, \f{\W_b}{s,n}$, we show that the
coefficients varying with $s$ fulfil a local limit law (as $n$ tends to
$\infty$). The maximum is then attained close to the mean.


\begin{proof}[Proof of Theorem~\ref{thm:maximum-full}]
  Set
  \begin{equation*}
    \f{g_n}{u} = [x^n] \f{G}{x,u} = \sum_{s=1}^{\infty} \frac{\f{\W_b}{s,n}}{n!} u^s.
  \end{equation*}
  We extract $g_n$ from the bivariate generating function $\f{G}{x,u}$. In
  order to do so, we proceed as in Theorem~IX.9 (singularity perturbation for
  meromorphic functions) of Flajolet and
  Sedgewick~\cite{Flajolet-Sedgewick:ta:analy}. An important detail here is the fact that $\frac{\f{\W_b}{s,n}}{n!} [u^sx^n] \f{G}{x,u}$ can only be nonzero if $s \equiv n \bmod (b-1)$, hence $g_n$ can also be expressed as
$$g_n(u) = u^r h_n(u^{b-1}),$$
where $r \in \{0,1,\ldots,b-2\}$ is chosen in such a way that $r \equiv n \bmod (b-1)$. This is also the reason why it was enough in Lemma~\ref{lem:alpha-minimal} to consider the case $|\Arg u| \leq \frac{\pi}{b-1}$. 

Now we check that all requirements for applying the quasi-power theorem are fulfilled.
By Lemma~\ref{lem:alpha-dominant-root}, the function $\f{G}{x,1}$ has a dominant simple pole at $x=\theta$ and no other singularities with absolute values smaller than $\theta+\epsilon$. The denominator $1-u \f{W}{x}$ is analytic and not degenerated at  $(x,u)=(\theta,1)$; the latter since its derivative with respect to $x$ is
$\f{W'}{\theta}\neq0$ ($\theta$ is a simple root of $F$) and its derivative
  with respect to $u$ is $-\f{W}{\theta}=-1\neq0$. 

Thus the function $\f{\theta}{u}$ which gives the solution to the equation $W(\theta(u)) = \theta$ with smallest modulus has the following properties: it is
  analytic at $u=1$, it fulfils $\theta(1)=\theta$, and for some $\epsilon > 0$ and $u$ in a suitable neighbourhood of $1$, there is no $x \neq \theta(u)$ with $W(x) =
  1/u$ and $|x| \leq \theta + \epsilon$.

  Therefore, by Cauchy's integral formula and the residue theorem, we obtain
  \begin{align*}
    \f{g_n}{u} &= -\f{\operatorname{Res}}{\frac{1}{1-u \f{W}{x}} x^{-n-1}, 
      x=\f{\theta}{u}} 
    +\frac{1}{2\pi i} \oint_{\abs{x}=\theta+\epsilon} \f{G}{z,u}\frac{\dd z}{z^{n+1}} \\
    &= \frac{1}{u \f{\theta}{u} \f{W'}{\f{\theta}{u}}} 
    \left(\frac{1}{\f{\theta}{u}}\right)^n
    + \f{O}{(\theta+\epsilon)^{-n}}
  \end{align*}
  for $u$ in a suitable neighbourhood of $1$. 

  To get the results claimed in Theorem~\ref{thm:maximum-full}, we use a
  local version of the quasi-power theorem, see Theorem IX.14
  of~\cite{Flajolet-Sedgewick:ta:analy} or Hwang's original
  paper~\cite{Hwang:1998:LLT}.  Set
  \begin{equation*}
    \f{A}{u} = \left(u \f{\theta}{u} \f{W'}{\f{\theta}{u}} \right)^{-1}
  \end{equation*}
  and
  \begin{equation*}
    \f{B}{u} = \left( \f{\theta}{u} \right)^{-1},
  \end{equation*}
  so that
  \begin{equation*}
    g_n(u) = A(u) B(u)^n + \Oh{(\theta+\epsilon)^{-n}}.
  \end{equation*}
  In terms of $h_n$, this becomes
  \begin{equation*}
    h_n(v) = v^{-r/(b-1)} A(v^{1/(b-1)}) B(v^{1/(b-1)})^n + \Oh{(\theta+\epsilon)^{-n}}.
  \end{equation*}
  Here, $v^{1/(b-1)}$ is taken to be the principal $(b-1)$th root of $v$, which
  satisfies $\abs{\Arg v^{1/(b-1)}} \leq \frac{\pi}{b-1}$.

  Since $\f{\theta}{u} \neq 0$ for $u$ in a suitable neighbourhood of $0$, the
  function $B$ is analytic at zero, and so is the function $A$ (since $W$ is
  analytic in a neighbourhood of $\theta(1) = \theta$ as well and has a nonzero
  derivative there). Moreover, we can use the fact that
  $\abs{\f{\theta}{e^{i\varphi}}}$ has a unique minimum at $\varphi=0$ if we
  assume that $\abs{\varphi} \leq \frac{\pi}{b-1}$ (which follows from
  Lemma~\ref{lem:alpha-minimal}).
  
  As a result, Theorem IX.14 of~\cite{Flajolet-Sedgewick:ta:analy} (slightly
  adapted to account for the periodicity of $g_n$) gives us
  \begin{equation}\label{eq:max-llt}
    \begin{split}
      \frac{\f{\W_b}{s,n}}{n!} &= 
      \frac{(b-1)\f{A}{1} \f{B}{1}^n}{\sigma \sqrt{2\pi n}}
      \f{\operatorname{exp}}{-\frac{z^2}{2\sigma^2}}
      \left(1+ \f{O}{\frac{1}{\sqrt{n}}}\right) \\
      &= \frac{(b-1)\nu^n}{\theta \f{W'}{\theta} \sigma \sqrt{2\pi n}}
      \f{\operatorname{exp}}{-\frac{z^2}{2\sigma^2}} \left(1+
        \f{O}{\frac{1}{\sqrt{n}}}\right),
    \end{split}
  \end{equation}
  where $z=(s-\mu n)/\sqrt{n}$. Mean and variance can be calculated as follows.
  We have
  \begin{equation*}
    \mu = \frac{\f{B'}{1}}{\f{B}{1}} 
    = - \frac{\f{\theta'}{1}}{\f{\theta}{1}} 
    = \frac{1}{\theta \f{W'}{\theta}},
  \end{equation*}
  and $\sigma>0$ is determined by
  \begin{align*}
    \sigma^2 &= \frac{\f{B''}{1}}{\f{B}{1}} 
    + \frac{\f{B'}{1}}{\f{B}{1}}
    - \left(\frac{\f{B'}{1}}{\f{B}{1}}\right)^2
    = - \frac{\f{\theta''}{1}}{\f{\theta}{1}} 
    - \frac{\f{\theta'}{1}}{\f{\theta}{1}}
    + \left(\frac{\f{\theta'}{1}}{\f{\theta}{1}}\right)^2 \\
    &= \frac{\f{W''}{\theta}}{\theta \f{W'}{\theta}^3} 
    - \frac{1}{\theta \f{W'}{\theta}} + \frac{1}{\theta^2 \f{W'}{\theta}^2},
  \end{align*}
  where we used implicit differentiation of $\f{W}{\f{\theta}{u}} = 1/u$ to get
  expressions for $\f{\theta'}{u}$ and $\f{\theta''}{u}$.

  The value $\f{\W_b}{s,n}/n!$ is maximal with respect to $s$ when $s = \mu n +
  \Oh{1}$. Its asymptotic value can then be calculated by~\eqref{eq:max-llt}.
\end{proof}


\section{The Largest Denominator and 
  the Number of Distinct Parts}
\label{sec:parameters}


In this last section we analyze some parameters of our compositions of~$1$. In particular, we will see that the exponent of the
largest denominator occurring in a random composition into a given number of powers of $b$ and
the number of distinct summands are both asymptotically normally distributed
and that their means and variances are of linear order.

Let us start with the largest denominator, for which we obtain the following theorem. Note
that we suppress the dependence on $b$ in all constants again.


\begin{thm}\label{thm:largest}
  The exponent of the largest denominator in a random composition of $1$ into
  $m = (b-1)n+1$ powers of $b$ is asymptotically normally distributed with mean
  $\mu_\ell n + \Oh{1}$ and variance $\sigma_\ell^2 n + \Oh{1}$.
\end{thm}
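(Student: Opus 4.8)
The plan is to introduce a bivariate generating function that tracks the largest exponent, adapt the infinite transfer-matrix machinery of Section~\ref{sec:rec}, and then extract a central limit theorem by the usual quasi-power (Hwang) theorem once meromorphy and a suitable singularity structure are established.

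\medskip

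\textbf{Setting up the bivariate generating function.} First I would refine the quantities $\f{Q_r}{x}$ from Section~\ref{sec:rec} by introducing a second variable~$y$ marking the largest exponent. Concretely, for a partition $\vc{k} \in \Pa_{m,r}$ with largest entry $k_1$, assign it weight $\wt(\vc{k}) x^m y^{k_1}$ and set
\begin{equation*}
  \f{Q_r}{x,y} = \sum_{m \geq 0} \sum_{\vc{k} \in \Pa_{m,r}} \wt(\vc{k})\, x^m y^{k_1}.
\end{equation*}
The recursive construction from Section~\ref{sec:rec} replaces $s$ of the $r$ largest fractions (denominator $b^{k_1}$) by $bs$ fractions of denominator $b^{k_1+1}$, so the largest exponent increases by exactly~$1$; hence~\eqref{eq:weightrel} upgrades to $\wt(\vc{k'}) x^{m+s} y^{k_1+1} = \wt(\vc{k}) x^{m+s} y^{k_1} \cdot y \cdot \frac{r!}{(r-s)!\,(bs)!}$. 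This yields the same system~\eqref{eq:genfun_id} as before but with every right-hand occurrence of $\f{Q_r}{x}$ multiplied by~$y$, i.e.\ a matrix identity $\vc{V}(x,y) = y\,\vc{M}(x)\vc{V}(x,y) + \frac{x}{b!}\vc{e}_1$ (the starting term, the trivial partition $b^0=1$, contributes $y^0$, so it is unmarked, matching $\frac{x}{b!}\vc{e}_1$; I would double-check the indexing convention so that the trivial partition's exponent is~$0$). Consequently
\begin{equation*}
  \f{Q}{x,y} = 1 + \frac{x}{b!}\, \vc{1}^T (\vc{I} - y\,\vc{M}(x))^{-1} \vc{e}_1
  = 1 + \frac{x}{b!}\, \frac{\f{S}{x,y}}{\f{T}{x,y}},
\end{equation*}
where $\f{T}{x,y} = \det(\vc{I} - y\,\vc{M}(x))$ and $\f{S}{x,y}$ is the corresponding modified determinant. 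The bounds of Section~\ref{sec:bounds} go through essentially verbatim, since introducing $y$ only changes $x^{i_1+\dots+i_h}$ to $x^{i_1+\dots+i_h} y^h$ in~\eqref{eq:detexpression} and $\abs{y}$ can be absorbed into the (super-exponentially decaying) estimate; so $\f{S}{x,y}$ and $\f{T}{x,y}$ are entire in $x$ for $y$ in any fixed compact set, and $\f{Q}{x,y}$ is meromorphic in $x$ with $y$ as an analytic parameter.

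\medskip

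\textbf{Singularity analysis and the quasi-power theorem.} At $y=1$ we recover $\f{Q}{x}$, whose dominant singularity is the simple pole $\rho = \rho_b = 1/\gamma$ of Lemma~\ref{lem:poles}. The argument there localizes the singularity as the solution of $\f{R}{x,y} = 1$ where $\f{R}{x,y} = y\,m_{11}(x) + y^2\,\vc{r}(x)(\vc{I} - y\,\overline{\vc{M}}(x))^{-1}\vc{c}(x)$ is a power series in $x$ with coefficients that are polynomials in $y$ with nonnegative coefficients; since $\partial_x \f{R}{\rho,1} > 0$, the implicit function theorem gives an analytic function $\f{\rho}{y}$ for $y$ near~$1$ with $\f{\rho}{1} = \rho$, and the triangle-inequality argument of Lemma~\ref{lem:poles} (together with aperiodicity, exactly as in Lemma~\ref{lem:alpha-minimal}, to handle the $b>2$ periodicity in the exponents) shows $\f{\rho}{y}$ is the unique dominant singularity for $y$ in a neighbourhood of~$1$, with a uniform spectral gap. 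Then, just as in the proof of Theorem~\ref{thm:maximum-full}, Cauchy's formula plus residues gives
\begin{equation*}
  [x^m]\f{Q}{x,y} = \f{c}{y}\, \f{\rho}{y}^{-m}\bigl(1 + \Oh{\kappa^m}\bigr),
  \qquad \f{c}{y} = -\frac{\rho(y)}{b!}\cdot\frac{\f{S}{\f{\rho}{y},y}}{\partial_x \f{T}{\f{\rho}{y},y}\cdot \f{\rho}{y}},
\end{equation*}
with $\f{\rho}{y}$ and $\f{c}{y}$ analytic and nonzero at $y=1$ and $\f{\rho}{y}$ satisfying the variability condition $\bigl(\log\f{\rho}{y}\bigr)'' + \bigl(\log\f{\rho}{y}\bigr)' \neq 0$ at $y=1$ (a genuinely combinatorial non-degeneracy fact — see the obstacle below). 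Normalizing $[x^m]\f{Q}{x,y}$ by its value at $y=1$ yields the probability generating function of the largest exponent $L_n$ (for a uniformly random composition counted by $q_b(n)$), and Hwang's quasi-power theorem~\cite{Hwang:1998:LLT} (or Theorem~IX.8 of~\cite{Flajolet-Sedgewick:ta:analy}) delivers asymptotic normality with
\begin{equation*}
  \mu_\ell = -\frac{\f{\rho'}{1}}{\f{\rho}{1}}, \qquad
  \sigma_\ell^2 = -\frac{\f{\rho''}{1}}{\f{\rho}{1}} - \frac{\f{\rho'}{1}}{\f{\rho}{1}} + \Bigl(\frac{\f{\rho'}{1}}{\f{\rho}{1}}\Bigr)^2,
\end{equation*}
both obtainable by implicit differentiation of $\f{R}{\f{\rho}{y},y} = 1$, and mean $\mu_\ell n + \Oh{1}$, variance $\sigma_\ell^2 n + \Oh{1}$ as claimed.

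\medskip

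\textbf{Main obstacle.} The delicate point is verifying that $\sigma_\ell^2 > 0$, i.e.\ that the variance is genuinely of linear order rather than $\Oh{1}$ — equivalently that $\f{\rho}{y}$ is not of the degenerate form $c\cdot d^y$ near $y=1$. Since we do not have a closed form for $\f{T}{x,y}$, I cannot simply compute $\sigma_\ell^2$ symbolically; instead I would argue non-degeneracy combinatorially, e.g.\ by exhibiting two families of compositions of the same size with largest exponents differing by $\Theta(n)$ and each carrying a positive proportion of the total weight (the $b$-copies-of-$b^{-m}$ construction from Lemma~\ref{lem:poles} gives largest exponent $\approx m$, while ``flatter'' compositions give a strictly smaller growth rate), so that $L_n/n$ cannot concentrate at a single value with $o(n)$ fluctuation. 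Rigorously, this amounts to checking the standard variability condition of the quasi-power theorem; with the spectral-gap estimates from Section~\ref{sec:bounds} one can also pin down $\sigma_\ell^2$ numerically via interval arithmetic (as in Section~\ref{sec:numerical}) to confirm $\sigma_\ell^2 > 0$, which is all that the statement requires. A minor additional bookkeeping issue is the base-$b$ periodicity (coefficients supported on $m$ with $n = (b-1)m+1$), but this is only a matter of passing to $x^{b-1}$ and restricting $\Arg u$ as in Lemma~\ref{lem:alpha-minimal}, and does not affect the Gaussian limit.
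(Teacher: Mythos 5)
Your proposal follows essentially the same route as the paper's (sketched) proof: a bivariate generating function with $y$ marking the exponent of the largest denominator, giving the matrix identity $\vc{V}(x,y)=y\,\vc{M}(x)\vc{V}(x,y)+\frac{xy}{b!}\vc{e}_1$, meromorphy in $x$ from the bounds of Section~\ref{sec:bounds}, and then the meromorphic singularity-perturbation scheme together with Hwang's quasi-power theorem. The only slip is the missing factor $y$ in your inhomogeneous term (splitting the trivial partition already raises the largest exponent to $1$), which shifts the distribution by only $\Oh{1}$ and is harmless for the stated mean and variance; otherwise your write-up, including the explicit localization of $\rho(y)$ via $R(x,y)=1$ and the attention to $\sigma_\ell^2>0$, is if anything more detailed than the paper's sketch.
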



Numerical approximations to the values of $\mu_\ell$ and $\sigma_\ell^2$ can be
found in Table~\ref{tab:parameters}. The proof runs along the same lines as the proofs of Theorems~\ref{thm:asymptotics:general_base} and~\ref{thm:maximum-full}, so we only give a sketch here.


\begin{table}
  \centering
  \begin{equation*}
    \begin{array}{ccccc}
      b & \mu_\ell & \sigma_\ell^2 & \mu_d & \sigma_d^2  \\
      \hline
      2 & 0.81885148 & 2.38703164 & 0.71440975 & 2.13397882 \\
3 & 0.93352696 & 0.53468588 & 0.93318787 & 0.53600822 \\
4 & 0.97869416 & 0.15390515 & 0.97869416 & 0.15390519 \\
5 & 0.99366804 & 0.04335760 & 0.99366804 & 0.04335760 \\
6 & 0.99819803 & 0.01180985 & 0.99819803 & 0.01180985 \\
7 & 0.99950066 & 0.00315597 & 0.99950066 & 0.00315597 \\
8 & 0.99986404 & 0.00083471 & 0.99986404 & 0.00083471 \\

      \hline
    \end{array}
  \end{equation*}
  \caption{Values (numerical approximations) for the constants of 
    Theorems~\ref{thm:largest} and~\ref{thm:distinct}. In the numerical calculations 
    the power series were approximated by a polynomial consisting 
    of $40$ terms.}
  \label{tab:parameters}
\end{table}


\begin{proof}[Sketch of proof of Theorem~\ref{thm:largest}]
  We start by considering a bivariate generating function for the investigated
  parameter. In the recursive step described in Section~\ref{sec:rec} that led
  us to the identity~\eqref{eq:genfun_id}, the exponent of the largest denominator
  increases by $1$. Thus it is very easy to incorporate this parameter into the
  generating function. Indeed, if $\la(\vc{k})$ denotes the exponent of the
  largest denominator that occurs in a composition (or partition) $\vc{k}$,
  then the bivariate generating function
  \begin{equation*}
    \f{L_r}{x,y} = \sum_{n \geq 0} \sum_{\vc{k} \in C_{n,r}} 
    \frac{1}{((b-1)n+1)!}\, x^ny^{\la(\vc{k})}
    = \sum_{n \geq 0} \sum_{\vc{k} \in \Pa_{n,r}} \wt(\vc{k})\, x^n y^{\la(\vc{k})}
  \end{equation*}
  satisfies $\f{L_1}{x,y} = 1$ and
  \begin{equation*}
    \f{L_{bs}}{x,y} = x^s y \sum_{r \geq s} \frac{r!}{(r-s)!\,(bs)!} \f{L_r}{x,y}.
  \end{equation*}
  So if we set $\vc{V}(x,y) = (\f{L_b}{x,y}, \f{L_{2b}}{x,y}, \f{L_{3b}}{x,y},
  \ldots)^T$, then we now have
  \begin{equation*}
    \vc{V}(x,y) = \frac{xy}{b!} (I-y\,\vc{M}(x))^{-1} \vc{e}_1
  \end{equation*}
  in analogy to~\eqref{eq:vexplicit} with the same infinite matrix as in
  Section~\ref{sec:rec}. Moreover, we obtain
  \begin{equation*}
    \f{L}{x,y} = \sum_{r \geq 1} \f{L_r}{x} 
    = y + \vc{1}^T \vc{V}(x,y) 
    = 1 + \frac{xy}{b!} \vc{1}^T (I-y\,\vc{M}(x))^{-1} \vc{e}_1.
  \end{equation*}
It follows by the same estimates as in Section~\ref{sec:bounds} that this is a meromorphic function in $x$ for $y$ in a suitable neighbourhood of $1$. Thus our bivariate generating function belongs to the meromorphic scheme as described in Section IX.6 of~\cite{Flajolet-Sedgewick:ta:analy}, and the asymptotics of mean and variance are obtained by standard tools of singularity analysis. Asymptotic normality follows by Hwang's quasi-power theorem~\cite{Hwang:1998}.
\end{proof}


For the number of distinct parts we prove the following result.


\begin{thm}\label{thm:distinct}
  The number of distinct parts in a random composition of $1$ into $m =
  (b-1)n+1$ parts is asymptotically normally distributed with mean $\mu_d n +
  \Oh{1}$ and variance $\sigma_d^2 n + \Oh{1}$.
\end{thm}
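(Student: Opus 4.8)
The strategy mirrors the proof of Theorem~\ref{thm:largest}: construct a bivariate generating function that tracks the number of distinct parts, verify that it fits the meromorphic singularity-analysis scheme of Section~IX.6 of~\cite{Flajolet-Sedgewick:ta:analy}, and then invoke Hwang's quasi-power theorem~\cite{Hwang:1998} to get asymptotic normality with linear mean and variance. The only genuinely new ingredient compared to Theorem~\ref{thm:largest} is that ``distinctness'' is not simply incremented at every recursive step, so the bookkeeping in the recursion~\eqref{eq:genfun_id} must be redone with a second variable $y$ marking distinct part-sizes, and we must be a little careful about how a new part-size is created and when an existing one disappears.

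\textbf{Setting up the generating function.} Recall the recursive construction from Section~\ref{sec:rec}: from a partition $\vc{k}\in\Pa_{m,r}$ in which the largest denominator occurs $r$ times, we produce a new partition in $\Pa_{m+s,bs}$ by replacing $s$ of those $r$ copies by $bs$ copies of the next-smaller part-size. Let $\di(\vc{k})$ be the number of distinct part-sizes of $\vc{k}$. In the recursive step, a \emph{new} part-size (with multiplicity $bs$) is always created, so $\di$ increases by $1$; however, if $s=r$, i.e.\ \emph{all} copies of the old largest part are used up, then that old part-size disappears, so the net change is $0$ rather than $+1$. Thus, writing $y$ for the variable marking distinct parts and setting
\begin{equation*}
  \f{D_r}{x,y} = \sum_{m\geq0} \sum_{\vc{k}\in\Pa_{m,r}} \wt(\vc{k})\, x^m y^{\di(\vc{k})},
\end{equation*}
we get $\f{D_1}{x,y}=y$ (the trivial partition has one distinct part) and, for $s\geq1$,
\begin{equation*}
  \f{D_{bs}}{x,y} = x^s y \sum_{r > s} \frac{r!}{(r-s)!\,(bs)!}\,\f{D_r}{x,y}
  + x^s \cdot \frac{s!}{0!\,(bs)!}\,\f{D_s}{x,y},
\end{equation*}
where the last term (corresponding to $r=s$) carries no extra factor of $y$. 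Equivalently, this is the relation $\f{D_{bs}}{x,y} = x^s\sum_{r\geq s}\frac{r!}{(r-s)!\,(bs)!}\bigl(y + (1-y)[r=s]\bigr)\f{D_r}{x,y}$, which we can package as $\vc{V}(x,y) = \vc{M}(x,y)\vc{V}(x,y) + \text{(initial term)}$ with an infinite matrix $\vc{M}(x,y)$ that, at $y=1$, reduces to $y\vc{M}(x)$-style form and recovers $\f{Q}{x}$; more precisely $\vc{M}(x,y) = y\,\vc{M}(x) + (1-y)\vc{D}(x)$ where $\vc{D}(x)$ keeps only the ``diagonal'' $r=s$ contributions $m_{ss}$. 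One then writes $\f{D}{x,y}$ as $1 + \vc{1}^T\vc{V}(x,y)$ (up to the same $x/b!$ normalisation as before) and obtains it as a quotient of two determinants of $I - \vc{M}(x,y)$-type matrices.

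\textbf{Analytic continuation and the quasi-power theorem.} The coefficient bounds of Section~\ref{sec:bounds} go through essentially verbatim: the perturbed matrix $\vc{M}(x,y)$ has the same row-growth behaviour, and for $y$ in a fixed compact neighbourhood of $1$ the extra factors $y$ and $1-y$ only change constants, so $\det(I-\vc{M}(x,y))$ and the numerator determinant are entire in $x$, hence $\f{D}{x,y}$ is meromorphic in $x$ for each such $y$. By Lemma~\ref{lem:poles} (and continuity of the dominant pole $\rho(y)$ in $y$, which follows from the implicit-function theorem applied to the reduced scalar equation $\f{R}{x,y}=1$ exactly as in the proof of Lemma~\ref{lem:poles}, noting $\f{R}{x,1}$ has the simple root $\rho_b$), for $y$ near $1$ the function $x\mapsto\f{D}{x,y}$ has a single simple dominant pole $\rho(y)$ with $\rho(1)=\rho_b$, varying analytically with $y$, and no other singularity of modulus $<\rho(y)+\epsilon$. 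This puts us squarely in the meromorphic scheme of Theorem~IX.9 / Section~IX.6 of~\cite{Flajolet-Sedgewick:ta:analy}: $[x^m]\f{D}{x,y} = C(y)\rho(y)^{-m}(1+\Oh{\kappa^m})$ with $C,\rho$ analytic and nonzero near $y=1$, which by Hwang's quasi-power theorem~\cite{Hwang:1998} yields a central limit theorem with mean $\mu_d n$ and variance $\sigma_d^2 n$ (up to $\Oh1$), where $\mu_d = -\rho'(1)/\rho(1)$ and $\sigma_d^2 = -\rho''(1)/\rho(1) - \rho'(1)/\rho(1) + (\rho'(1)/\rho(1))^2$; positivity of $\sigma_d^2$ (the non-degeneracy / ``variability'' condition) holds because the number of distinct parts is genuinely non-constant, and can be checked directly from the recursion or from the numerical values in Table~\ref{tab:parameters}.

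\textbf{Main obstacle.} The one step that needs real care — as opposed to a routine repeat of earlier arguments — is pinning down \emph{exactly} how $\di(\vc{k})$ transforms under the recursive operation, i.e.\ recognising the $r=s$ special case and encoding it correctly into $\vc{M}(x,y)$ without breaking the structure (upper-triangular-plus-positive, entries growing along rows) that Sections~\ref{sec:bounds}--\ref{sec:poles-gf} rely on. Since the modification $(1-y)\vc{D}(x)$ only touches the main diagonal and is a bounded perturbation for $y$ near $1$, none of those structural properties is actually lost, so once the recursion is written down correctly the rest is, as for Theorem~\ref{thm:largest}, standard; hence we again only sketch the argument rather than reproduce all the estimates.
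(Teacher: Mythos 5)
Your proposal is correct and follows essentially the same route as the paper's own sketch: the same bivariate recursion in which the $r=s$ splitting step carries no factor $y$, the same perturbed infinite transfer matrix, entireness/meromorphy as in Sections~\ref{sec:bounds}--\ref{sec:poles-gf}, and then the meromorphic scheme plus the quasi-power theorem. Only two harmless slips: the modified entries sit at positions $(bj,j)$ (below the main diagonal), not on it, and the constant term of the bivariate generating function is $y$ rather than $1$; neither affects the argument.
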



Approximations of the constants can be found in
Table~\ref{tab:parameters}. Again we only sketch the proof, since it uses the same ideas.

\begin{proof}[Sketch of proof of Theorem~\ref{thm:distinct}]
Again, we consider a bivariate generating function. In the recursive step, the number
  of distinct parts increases by $1$, unless all fractions with highest
  denominator are split. In this case, the number of distinct parts stays the
  same. One can easily translate this to the world of generating functions: let
  $d(\vc{k})$ be the number of distinct parts in $\vc{k}$, and let
  $\f{D_r}{x,y}$ be the bivariate generating function, where $y$ now marks the
  number of distinct parts, i.e., we use
  \begin{equation*}
    \f{D_r}{x,y} = \sum_{n \geq 0} \sum_{\vc{k} \in C_{n,r}} 
    \frac{1}{((b-1)n+1)!}\, x^ny^{\di(\vc{k})}
    = \sum_{n \geq 0} \sum_{\vc{k} \in \Pa_{n,r}} \wt(\vc{k})\, x^n y^{\di(\vc{k})}.
  \end{equation*}
  Then we have $\f{D_1}{x,y} = y$ and
  \begin{equation*}
    \f{D_{bs}}{x,y} = \frac{s!\, x^s}{(bs)!} \f{D_s}{x,y}
    + x^s y \sum_{r > s} \frac{r!}{(r-s)!\,(bs)!} \f{D_r}{x,y}.
  \end{equation*}
  Once again, we take the infinite vector $\vc{V}(x,y) = (\f{D_b}{x,y},
  \f{D_{2b}}{x,y}, \f{D_{3b}}{x,y}, \ldots)^T$, and we define a modified
  version $\vc{M}^*$ of the infinite matrix by its entries
  \begin{equation*}
    m_{ij}^* = 
    \begin{cases}
      \frac{(bj)!\,x^iy}{(bj-i)!\,(bi)!} & \text{if } i < bj, \\
      \frac{i!\,x^i}{(bi)!} & \text{if } i = bj, \\
      0 & \text{otherwise.}
    \end{cases}
  \end{equation*}
  Now
  \begin{equation*}
    \vc{V}(x,y) = \frac{xy}{b!} (I-\vc{M}^*(x))^{-1} \vc{e}_1
  \end{equation*}
  in analogy to~\eqref{eq:vexplicit}, and moreover
  \begin{equation*}
    \f{D}{x} = \sum_{r \geq 1} \f{D_r}{x}
    = y + \vc{1}^T \vc{V}(x,y)
    = y + \frac{xy}{b!} \vc{1}^T (I-\vc{M}^*(x))^{-1} \vc{e}_1.
  \end{equation*}
Once again, we find that the bivariate function belongs to the meromorphic scheme, so that we can apply singularity analysis and the quasi-power theorem to obtain the desired result.
\end{proof}


\renewcommand{\MR}[1]{}
\bibliographystyle{amsplain}
\bibliography{compositions-full}

\providecommand{\Submitted}{Submitted} \providecommand{\availableat}{ available
  at } \providecommand{\alsoavailableat}{ also available at }
  \providecommand{\evavailableat}{earlier version available at }
  \providecommand{\toappearin}{To appear in } \providecommand{\toappear}{to
  appear} \providecommand{\inpreparation}{in preparation}
  \providecommand{\doi}[1]{\href{http://dx.doi.org/#1}{\path{doi:#1}}}
  \providecommand{\etc}{\emph{etc.}}\def\cprime{$'$}
\providecommand{\bysame}{\leavevmode\hbox to3em{\hrulefill}\thinspace}
\providecommand{\MR}{\relax\ifhmode\unskip\space\fi MR }
\providecommand{\MRhref}[2]{%
  \href{http://www.ams.org/mathscinet-getitem?mr=#1}{#2}
}
\providecommand{\href}[2]{#2}
\begin{thebibliography}{10}

\bibitem{Boyd:1975}
David~W. Boyd, \emph{The asymptotic number of solutions of a diophantine
  equation from coding theory}, J. Combinatorial Theory Ser. A \textbf{18}
  (1975), 210--215. \MR{0360437 (50 \#12887)}

\bibitem{Bruijn:1948:mahler}
Nicolaas~Govert de~Bruijn, \emph{On {M}ahler's partition problem}, Nederl.
  Akad. Wetensch., Proc. \textbf{51} (1948), 659--669 = Indagationes Math. 10,
  210--220 (1948). \MR{0025502 (10,16d)}

\bibitem{Drmota-Gittenberger-Morgenbesser:2012:inf-systems}
Michael Drmota, Bernhard Gittenberger, and Johannes Morgenbesser,
  \emph{Infinite systems of functional equations and gaussian limiting
  distributions}, 23rd Intern. Meeting on Probabilistic, Combinatorial, and
  Asymptotic Methods for the Analysis of Algorithms (AofA'12), DMTCS
  Proceedings, vol.~AQ, 2012, pp.~453--478.

\bibitem{Eaves:1970}
Reuben~E. Eaves, \emph{A sufficient condition for the convergence of an
  infinite determinant.}, SIAM J. Appl. Math. \textbf{18} (1970), 652--657.
  \MR{0258853 (41 \#3498)}

\bibitem{Elsholtz-Heuberger-Prodinger:2013:huffm}
Christian Elsholtz, Clemens Heuberger, and Helmut Prodinger, \emph{The number
  of {H}uffman codes, compact trees, and sums of unit fractions}, IEEE Trans.
  Inf. Theory \textbf{59} (2013), 1065--1075. \MR{3015716}

\bibitem{Flajolet-Odlyzko:1990:singul}
Philippe Flajolet and Andrew Odlyzko, \emph{Singularity analysis of generating
  functions}, SIAM J. Discrete Math. \textbf{3} (1990), 216--240. \MR{MR1039294
  (90m:05012)}

\bibitem{Flajolet-Prodinger:1987:level}
Philippe Flajolet and Helmut Prodinger, \emph{Level number sequences for
  trees}, Discrete Math. \textbf{65} (1987), no.~2, 149--156. \MR{893076
  (88e:05030)}

\bibitem{Flajolet-Sedgewick:ta:analy}
Philippe Flajolet and Robert Sedgewick, \emph{Analytic combinatorics},
  Cambridge University Press, Cambridge, 2009.

\bibitem{Giorgilli-Molteni:2013:repr-2-powers-rec}
Antonio Giorgilli and Giuseppe Molteni, \emph{Representation of a 2-power as
  sum of {$k$} 2-powers: a recursive formula}, J.~Number Theory \textbf{133}
  (2013), no.~4, 1251--1261. \MR{3003997}

\bibitem{Hwang:1998:LLT}
Hsien-Kuei Hwang, \emph{Large deviations of combinatorial distributions. {II}.
  {L}ocal limit theorems}, Ann. Appl. Probab. \textbf{8} (1998), no.~1,
  163--181. \MR{1620350}

\bibitem{Hwang:1998}
\bysame, \emph{On convergence rates in the central limit theorems for
  combinatorial structures}, European J. Combin. \textbf{19} (1998), 329--343.

\bibitem{Knuth:1966:almost}
Donald~E. Knuth, \emph{An almost linear recurrence}, Fibonacci Quart.
  \textbf{4} (1966), 117--128. \MR{0199168 (33 \#7317)}

\bibitem{Komlos-Moser-Nemetz:1984}
J.~Koml{\'o}s, W.~Moser, and T.~Nemetz, \emph{On the asymptotic number of
  prefix codes}, Mitt. Math. Sem. Giessen (1984), no.~165, 35--48. \MR{745868
  (86a:94009)}

\bibitem{Lehr-Shallit-Tromp:1996:vec-spc-autom-reals}
Siegfried Lehr, Jeffrey Shallit, and John Tromp, \emph{On the vector space of
  the automatic reals}, Theoret. Comput. Sci. \textbf{163} (1996), no.~1--2,
  193--210.

\bibitem{Mahler:1940:spec-functional-eq}
Kurt Mahler, \emph{On a special functional equation}, J. London Math. Soc.
  \textbf{15} (1940), 115--123. \MR{0002921 (2,133e)}

\bibitem{Molteni:2010:canc-exp-sum}
Giuseppe Molteni, \emph{Cancellation in a short exponential sum}, J.~Number
  Theory \textbf{130} (2010), no.~9, 2011--2027.

\bibitem{Molteni:2012:repr-2-powers-asy}
\bysame, \emph{Representation of a 2-power as sum of {$k$} 2-powers: the
  asymptotic behavior}, Int. J. Number Theory \textbf{8} (2012), no.~8,
  1923--1963. \MR{2978848}

\bibitem{OEIS:2014}
\emph{The {O}n-{L}ine {E}ncyclopedia of {I}nteger {S}equences},
  \url{http://oeis.org}, 2014.

\bibitem{Pennington:1953:Mahler-part-prob}
William~Barry Pennington, \emph{On {M}ahler's partition problem}, Ann. of Math.
  (2) \textbf{57} (1953), 531--546. \MR{0053959 (14,846m)}

\bibitem{Robbins:2007:upper-bound-partitions-distinct}
Neville Robbins, \emph{A simply-obtained upper bound for $q(n)$}, Annales Univ.
  Sci. Budapest., Sect. Comp. \textbf{27} (2007), 39--43.

\bibitem{Stein-others:2014:sage-mathem-6.3}
William~A. Stein et~al., \emph{{S}age {M}athematics {S}oftware ({V}ersion
  6.3)}, The Sage Development Team, 2014, \url{http://www.sagemath.org}.

\end{thebibliography}


\end{document}